\newcommand{\Z}{\mathbb{Z}}
\newcommand{\K}{\mathbb{K}}
\newcommand{\sK}{\mathscr{K}}
\newcommand{\cC}{\mathcal{C}}
\newcommand{\cD}{\mathcal{D}}
\newcommand{\cM}{\mathcal{M}}
\newcommand{\B}[1]{B #1}
\newcommand{\ubar}[1]{\underline{#1}}
\newcommand{\core}[1]{\sK ({#1})}
\newcommand{\ori}{\textrm{or}}
\newcommand{\id}{\mathrm{id}}
\newcommand{\tr}{\mathrm{tr}}
\newcommand{\op}{\mathrm{op}}
\newcommand{\ot}{\otimes}
\newcommand{\eps}{\varepsilon}
\newcommand{\fd}{\mathrm{fd}}
\DeclareMathOperator{\Hom}{Hom}
\DeclareMathOperator{\Ob}{Ob}
\DeclareMathOperator{\Aut}{Aut}
\DeclareMathOperator{\Set}{Set}
\DeclareMathOperator{\Cob}{Cob}
\DeclareMathOperator{\Fun}{Fun}
\DeclareMathOperator{\Rep}{Rep}
\DeclareMathOperator{\Frob}{Frob}
\DeclareMathOperator{\CY}{CY}
\DeclareMathOperator{\Vect}{Vect}
\DeclareMathOperator{\Cat}{Cat}
\DeclareMathOperator{\ev}{ev}
\DeclareMathOperator{\End}{End}
\DeclareMathOperator{\Alg}{Alg}
\DeclareMathOperator{\Nat}{Nat}
\DeclareMathOperator{\Bicat}{Bicat}
\DeclareMathOperator{\Top}{Top}
\DeclareMathOperator{\BiGrp}{BiGrp}
\theoremstyle{definition}
\newtheorem{newdef}{Definition}[section]
\newtheorem{ex}[newdef]{Example}
\newtheorem{remark}[newdef]{Remark}
\theoremstyle{plain} 
\newtheorem{theorem}[newdef]{Theorem}
\newtheorem{lemma}[newdef]{Lemma}
\newtheorem{prop}[newdef]{Proposition}
\newtheorem{cor}[newdef]{Corollary}
\numberwithin{equation}{section}
\title{Frobenius algebras and homotopy fixed points of group actions on
  bicategories}
  \author{Jan Hesse}
  \author{Christoph Schweigert}
  \address{Fachbereich Mathematik, Universit\"at Hamburg, Bereich Algebra und Zahlentheorie, Bundesstraße 55, D – 20 146 Hamburg}
  \author{Alessandro Valentino}
  \address{Max Planck Institut f\"ur Mathematik, Vivatsgasse 7, 53111, Bonn}
  \date{}
\begin{document}
  \begin{abstract}
    We explicitly show that symmetric Frobenius structures on a
    finite-di\-men\-sion\-al, semi-simple algebra stand in bijection
    to homotopy fixed points of the trivial $SO(2)$-action on the
    bicategory of finite-dimensional, semi-simple algebras, bimodules
    and intertwiners.  The results are motivated by the 2-dimensional
    Cobordism Hypothesis for oriented manifolds, and can hence be
    interpreted in the realm of Topological Quantum Field Theory.
  \end{abstract}
  \begin{flushright}
    {ZMP-HH/16-13 \\ Hamburger Beitr\"age zur Mathematik Nr. 597}\\[1cm]
  \end{flushright}
  \maketitle
  \section{Introduction}
  While fixed points of a group action on a set form an ordinary
  subset, homotopy fixed points of a group action on a category as
  considered in \cite{kir02, egno-tensor-book} provide additional
  structure.

  In this paper, we take one more step on the categorical ladder by
  considering a topological group $G$ as a 3-group via its fundamental
  2-groupoid. We provide a detailed definition of an action of this
  3-group on an arbitrary bicategory $\cC$, and construct the
  bicategory of homotopy fixed points $\cC^G$ as a suitable limit of
  the action. Contrarily from the case of ordinary fixed points of
  group actions on sets, the bicategory of homotopy fixed points
  $\cC^G$ is strictly \enquote{larger} than the bicategory
  $\cC$. Hence, the usual fixed-point condition is promoted from a
  property to a structure.

  Our paper is motivated by the 2-dimensional Cobordism Hypothesis for
  oriented manifolds: according to \cite{Lurie09}, 2-dimensional
  oriented fully-extended topological quantum field theories are
  classified by homotopy fixed points of an $SO(2)$-action on the core
  of fully-dualizable objects of the symmetric monoidal target
  bicategory.  In case the target bicategory of a 2-dimensional
  oriented topological field theory is given by $\Alg_2$, the
  bicategory of algebras, bimodules and intertwiners, it is claimed in
  \cite[Example 2.13]{fhlt} that the additional structure of a
  homotopy fixed point should be given by the structure of a symmetric
  Frobenius algebra.

  As argued in \cite{Lurie09}, the $SO(2)$-action on $\Alg_2$ should
  come from rotating the 2-framings in the framed cobordism
  category. By \cite[Proposition 3.2.8]{davi11}, the induced
  action on the core of fully-dualizable objects of $\Alg_2$ is
  actually trivializable. Hence, instead of considering the action
  coming from the framing, we may equivalently study the
  \emph{trivial} $SO(2)$-action on $\Alg_2^\fd$.

  Our main result, namely Theorem \ref{thm:so2-fixed-point-bicat},
  computes the bicategory of homotopy fixed points $\cC^{SO(2)}$ of
  the trivial $SO(2)$-action on an arbitrary bicategory $\cC$. It
  follows then as a corollary that the bicategory
  $(\core{\Alg_2^\fd})^{SO(2)}$ consisting of homotopy fixed points of
  the trivial $SO(2)$-action on the core of fully-dualizable objects
  of $\Alg_2$ is equivalent to the bicategory $\Frob$ of semisimple
  symmetric Frobenius algebras, compatible Morita contexts, and
  intertwiners. This bicategory, or rather bigroupoid, classifies
  2-dimensional oriented fully-extended topological quantum field
  theories, as shown in \cite{schommerpries-classification}. Thus,
  unlike fixed points of the trivial action on a set, homotopy
  fixed-points of the trivial $SO(2)$-action on $\Alg_2$ are actually
  interesting, and come equipped with the additional structure of a
  symmetric Frobenius algebra.

  If $\Vect_2$ is the bicategory of linear abelian categories, linear
  functors and natural transformations, we show in corollary
  \ref{cor:cy-fixed-points} that the bicategory
  $(\core{\Vect_2^\fd})^{SO(2)}$ given by homotopy fixed points of the
  trivial $SO(2)$-action on the core of the fully dualizable objects
  of $\Vect_2$ is equivalent to the bicategory of Calabi-Yau
  categories, which we introduce in Definition
  \ref{def:calabi-yau-cat}.

  The two results above are actually intimately related to each other
  via natural considerations from representation theory. Indeed, by
  assigning to a finite-dimensional, semi-simple algebra its category
  of finitely-generated modules, we obtain a functor $\Rep: \core{
    \Alg_2^\fd} \to \core{ \Vect_2^\fd}$. This 2-functor turns out to
  be $SO(2)$-equivariant, and thus induces a morphism on homotopy
  fixed point bicategories, which is moreover an equivalence. More
  precisely, one can show that a symmetric Frobenius algebra is sent
  by the induced functor to its category of representations equipped
  with the Calabi-Yau structure given by the composite of the
  Frobenius form and the Hattori-Stallings trace. These results have
  appeared in \cite{hesse16}.

  The present paper is organized as follows: we recall the concept of
  Morita contexts between symmetric Frobenius algebras in section
  \ref{sec:frob-alg-morita-context}. Although most of the material has
  already appeared in \cite{schommerpries-classification}, we give
  full definitions to mainly fix the notation. We give a very explicit
  description of compatible Morita contexts between finite-dimensional
  semi-simple Frobenius algebras not present in
  \cite{schommerpries-classification}, which will be needed to relate
  the bicategory of symmetric Frobenius algebras and compatible Morita
  contexts to the bicategory of homotopy fixed points of the trivial
  $SO(2)$-action. The expert reader might wish to at least take notice
  of the notion of a compatible Morita context between symmetric
  Frobenius algebras in definition \ref{def:morita-compatible} and the
  resulting bicategory $\Frob$ in definition
  \ref{def:frob-bicategory}.

  In section \ref{sec:group-actions-bicat}, we recall the notion of a
  group action on a category and of its homotopy fixed points, which
  has been named \enquote{equivariantization} in \cite[Chapter
  2.7]{egno-tensor-book}. By categorifying this notion, we arrive at
  the definition of a group action on a bicategory and its homotopy
  fixed points. This definition is formulated in the language of
  tricategories. Since we prefer to work with bicategories, we
  explicitly spell out the definition in Remark
  \ref{rem:unpacking-fixed-point}.

  In section \ref{sec:results}, we compute the bicategory of homotopy
  fixed points of the trivial $SO(2)$-action on an arbitrary
  bicategory. Corollaries \ref{cor:frob-fixed-points} and
  \ref{cor:cy-fixed-points} then show equivalences of bicategories
  \begin{equation}
    \begin{aligned}
      (\core{\Alg_2^\fd})^{SO(2)} & \cong \Frob \\
      (\core{\Vect_2^\fd})^{SO(2)} & \cong \CY
    \end{aligned}
  \end{equation}
  where $\CY$ is the bicategory of Calabi-Yau categories. We note that
  the bicategory $\Frob$ has been proven to be equivalent \cite[Proposition
  3.3.2]{davi11} to a certain bicategory of 2-functors. We clarify the
  relationship between this functor bicategory and the bicategory of
  homotopy fixed points $(\core{\Alg_2^\fd})^{SO(2)}$ in Remark \ref{rem:davidovich}.

  Throughout the paper, we use the following conventions: all algebras
  considered will be over an algebraically closed field $\K$. All
  Frobenius algebras appearing will be symmetric.

\section*{Acknowledgments}
The authors would like to thank Ehud Meir for inspiring discussions and
Louis-Hadrien Robert for providing a proof of Lemma
\ref{lem:most-general-bimodule}. JH is supported by the RTG 1670
\enquote{Mathematics inspired by String Theory and Quantum Field
  Theory}. CS is partially supported by the Collaborative Research
Centre 676 \enquote{Particles, Strings and the Early Universe - the
  Structure of Matter and Space-Time} and by the RTG 1670
\enquote{Mathematics inspired by String Theory and Quantum Field
  Theory}. AV is supported by the \enquote{Max Planck Institut f\"ur Mathematik}.

\section{Frobenius algebras and Morita contexts}
In this section we will recall some basic notions regarding Morita
contexts, mostly with the aim of setting up notations. We will mainly
follow \cite{schommerpries-classification}, though we point the reader
to Remark \ref{rem:compatibility} for a slight difference in the
statement of the compatibility condition between Morita context and
Frobenius forms.
\label{sec:frob-alg-morita-context}
\begin{newdef}
  \label{def:morita-context}
  Let $A$ and $B$ be two algebras. A Morita context $\cM$ consists of
  a quadruple $\cM:=( {_{B}M_{A}}, {_{A}N_{B}},\eps,\eta)$, where
  $_BM_A$ is a $(B,A)$-bimodule, $_AN_B$ is an $(A,B)$-bimodule, and
  \begin{equation}
    \begin{aligned}
      \eps&: { _{A}N \otimes_{B}{ M_{A}}} \to {_{A}A_{A}} \\
      \eta&: {_BB_B} \to{ _{B}M \otimes_{A} {N_B}}
    \end{aligned}
  \end{equation}
  are isomorphisms of bimodules, so that the two diagrams
  \begin{equation}
    \label{eq:def-morita-context-1}
    \begin{tikzcd}[column sep=large]
      {_BM} \ot_A {N_B} \ot_B {M_A} \rar{\id_M \ot \eps}  & {_BM} \otimes_A {A_A} \dar \\
      {_BB} \ot_B {M_A} \uar{\eta \otimes \id_M} \rar & {_BM_A}
    \end{tikzcd}
  \end{equation}
  \begin{equation}
    \label{eq:def-morita-context-2}
    \begin{tikzcd}[column sep=large]
      {_AN} \ot_B {M} \ot_A {N_B} \dar{\eps \otimes \id_N} & {_AN} \otimes_B {B_B} \dar \lar{\id_N \ot \eta} \\
      {_AA} \ot_A {N_B} \rar & {_AN_B}
    \end{tikzcd}
  \end{equation}
  commute.
\end{newdef}
Note that Morita contexts are the adjoint 1-equivalences in the
bicategory $\Alg_2$ of algebras, bimodules and intertwiners. These
form a category, where the morphisms are given by the following:
\begin{newdef}
  \label{def:morphism-of-morita-context}
  Let $\cM:=( {_{B}M_{A}}, {_{A}N_{B}},\eps,\eta)$ and $\cM':=(
  {_{B}{M'}_{A}}, {_{A}{N'}_{B}},\eps',\eta')$ be two Morita contexts
  between two algebras $A$ and $B$. A morphism of Morita contexts
  consists of a morphism of $(B,A)$-bimodules $f:M \to M'$ and a
  morphism of $(A,B)$-bimodules $g:N \to N'$, so that the two diagrams
  \begin{equation}
    \begin{tikzcd}[row sep=large, column sep=large]
      _BM \ot_A N_B \rar{f \ot g} & {_BM' \ot_A N'_B} \\
      B \uar{\eta} \urar[swap]{\eta'}& {}
    \end{tikzcd}
    \qquad
    \begin{tikzcd}[row sep=large, column sep=large]
      _AN \ot_B M_A \rar{g \ot f} \dar[swap]{\eps} & _AN' \ot_B M'_A \dlar{\eps'} \\
      A & {}
    \end{tikzcd}
  \end{equation}
  commute.
\end{newdef}
If the algebras in question have the additional structure of a
symmetric Frobenius form $\lambda: A \to \K$, we would like to
formulate a compatibility condition between the Morita context and the
Frobenius forms.  We begin with the following two observations: if $A$
is an algebra, the map
\begin{equation}
  \label{eq:A/[A,A]-iso-A-ot-A}
  \begin{aligned}
    A/[A,A] &\to A \ot_{A \ot A^\op} A \\
    [a] & \mapsto a \ot 1
  \end{aligned}
\end{equation}
is an isomorphism of vector spaces, with inverse given by $a \ot b
\mapsto [ab]$.  Furthermore, if $B$ is another algebra, and
${(_BM_A},{ _AN_B}, \eps, \eta)$ is a Morita context between $A$ and
$B$, there is a canonical isomorphism of vector spaces
\begin{equation}
  \label{eq:tensor-product-switch}
  \begin{aligned}
    \tau: (N \ot_B M) \ot_{A \ot A^\op} (N \ot_B M) &\to (M \ot_A N) \ot_{B \ot B^\op} (M \ot_A N) \\
    n \ot m \ot n' \ot m' &\mapsto m \ot n' \ot m' \ot n .
  \end{aligned}
\end{equation}
Using the results above, we can formulate a compatibility condition
between Morita context and Frobenius forms, as in the following lemma.
\begin{lemma}
  \label{lem:A/[A,A]-B[B,B]-iso}
  Let $A$ and $B$ be two algebras, and let $( {_{B}M_{A}},
  {_{A}N_{B}},\eps,\eta)$ be a Morita context between $A$ and $B$.
  Then, there is a canonical isomorphism of vector spaces
  \begin{align}
    \begin{aligned}
      f: A / [A,A] & \to B /[B,B] \\
      [a] &\mapsto \sum_{i,j} \left[\eta^{-1}(m_j.a \ot n_i) \right]
    \end{aligned}
  \end{align}
  where $n_i$ and $m_j$ are defined by
  \begin{equation}
    \eps^{-1}(1_A)= \sum_{i,j} n_i \ot m_j \in N \ot_B M .
  \end{equation}
\end{lemma}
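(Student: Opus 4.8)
The plan is to realize the claimed map $f$ as a composition of three maps, each already shown to be an isomorphism (or easily so), and to check that the composite has the stated formula on elements. Concretely, I would use the isomorphism \eqref{eq:A/[A,A]-iso-A-ot-A} to identify $A/[A,A] \cong A \ot_{A\ot A^\op} A$, and likewise $B/[B,B] \cong B \ot_{B\ot B^\op} B$. The Morita context provides bimodule isomorphisms $\eps \colon N\ot_B M \to A$ and $\eta \colon B \to M\ot_A N$; applying $\eta^{-1}$ on both tensor factors gives an isomorphism $A\ot_{A\ot A^\op}A \cong (N\ot_B M)\ot_{A\ot A^\op}(N\ot_B M)$ — here one must check that the $A\ot A^\op$-balancing is respected, which holds because $\eps$ is a bimodule map. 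Then apply the swap isomorphism $\tau$ of \eqref{eq:tensor-product-switch} to land in $(M\ot_A N)\ot_{B\ot B^\op}(M\ot_A N)$, and finally apply $\eta^{-1}$ on both factors to reach $B\ot_{B\ot B^\op}B \cong B/[B,B]$. Composing, $f$ is manifestly an isomorphism of vector spaces.

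The remaining task is to verify that this composite sends $[a]$ to $\sum_{i,j}\bigl[\eta^{-1}(m_j.a\ot n_i)\bigr]$. I would trace an element through: $[a] \mapsto a\ot 1 \in A\ot_{A\ot A^\op}A$; writing $1_A = \eps(\sum_k n_k\ot m_k)$ via $\eps^{-1}(1_A)=\sum_k n_k\ot m_k$ (and noting $\eps^{-1}(a) = \sum_k n_k\ot m_k.a = \sum_k n_k.a\ot m_k$ using that $\eps$ is an $A$-bimodule map), we may rewrite $a\ot 1$ in $(N\ot_B M)\otimes_{A\ot A^\op}(N\ot_B M)$ as $\sum_{i,j} (n_i\ot m_i.a)\ot(n_j\ot m_j)$, or after moving the balancing appropriately, as a sum over $i,j$ of $n_i\ot m_j\ot n_i\ot m_j$-type terms weighted so that $a$ sits on one leg. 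Applying $\tau$ turns $n\ot m\ot n'\ot m'$ into $m\ot n'\ot m'\ot n$, and then applying $\eta^{-1}\otimes\eta^{-1}$ and collapsing via the isomorphism $M\ot_A N \xrightarrow{\eta^{-1}} B$, $B\ot_{B\ot B^\op}B \to B/[B,B]$ produces exactly the class $\sum_{i,j}[\eta^{-1}(m_j.a\ot n_i)]$.

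The main obstacle I anticipate is purely bookkeeping: keeping straight over which ring each tensor product is balanced, and checking at each stage that the relevant map descends to the coequalizer (i.e.\ is well-defined on the balanced tensor product). In particular, one must use the coherence diagrams \eqref{eq:def-morita-context-1} and \eqref{eq:def-morita-context-2} of the Morita context to see that moving elements of $A$ (resp.\ $B$) across the $\eps$'s and $\eta$'s is consistent, and that the two ways of reading $\eps^{-1}(a)$ agree. Once well-definedness is established at each step, independence of the chosen ``dual basis'' elements $n_i, m_j$ is automatic since the whole construction was coordinate-free; the explicit formula is merely one representative computation. I would also remark that a symmetric Frobenius form is not needed for this lemma — only the Morita context — and that compatibility with Frobenius forms is the subject of the subsequent definition.
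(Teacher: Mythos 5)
Your proposal follows exactly the same route as the paper: the chain $A/[A,A]\cong A\ot_{A\ot A^{\op}}A\cong (N\ot_B M)\ot_{A\ot A^{\op}}(N\ot_B M)\cong (M\ot_A N)\ot_{B\ot B^{\op}}(M\ot_A N)\cong B\ot_{B\ot B^{\op}}B\cong B/[B,B]$ using \eqref{eq:A/[A,A]-iso-A-ot-A}, the Morita isomorphisms, and the swap \eqref{eq:tensor-product-switch}, followed by an element chase to extract the formula, which is precisely the paper's argument at a comparable level of detail. One small slip: in the second step the isomorphism $A\ot_{A\ot A^{\op}}A\cong (N\ot_B M)\ot_{A\ot A^{\op}}(N\ot_B M)$ is induced by $\eps^{-1}$ on both factors, not $\eta^{-1}$ (as your own justification via $\eps$ being a bimodule map already indicates).
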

\begin{proof}
  Consider the following chain of isomorphisms:
  \begin{equation}
    \begin{aligned}
      f:   A/[A,A] &\cong A \ot_{A \ot A^\op} A && \text{(by equation \ref{eq:A/[A,A]-iso-A-ot-A})} \\
      & \cong (N \ot_B M) \ot_{A \ot A^\op} (N \ot_B M) && \text{(using $\eps \ot \eps$)} \\
      &\cong (M \ot_A N) \ot_{B \ot B^\op } (M \ot_A N) && \text{(by equation \ref{eq:tensor-product-switch})} \\
      &\cong B \ot_{B \ot B^\op} B && \text{(using $\eta \ot \eta$)} \\
      &\cong B/[B,B] &&\text{(by equation
        \ref{eq:A/[A,A]-iso-A-ot-A})}
    \end{aligned}
  \end{equation}
  Chasing through those isomorphisms, we can see that the map $f$ is
  given by
  \begin{equation}
    f([a]) =\sum_{i,j} \left[\eta^{-1}(m_j .a \ot  n_i ) \right]
  \end{equation}
  as claimed.
\end{proof}

The isomorphism $f$ described in Lemma \ref{lem:A/[A,A]-B[B,B]-iso}
allows to introduce the following relevant definition.
\begin{newdef}
  \label{def:morita-compatible}
  Let $(A,\lambda^A)$ and $(B,\lambda^B)$ be two symmetric Frobenius
  algebras, and let $( {_{B}M_{A}}, {_{A}N_{B}},\eps,\eta)$ be a
  Morita context between $A$ and $B$. Since the Frobenius algebras are
  symmetric, the Frobenius forms necessarily factor through $A/[A,A]$
  and $B/[B,B]$.  We call the Morita context \emph{compatible} with
  the Frobenius forms, if the diagram
  \begin{equation}
    \label{eq:morita-compatible}
    \begin{tikzcd}
      A/[A,A] \ar{dr}[swap]{\lambda^A} \ar{rr}{f} && B/[B,B] \ar{dl}{\lambda^B} \\
      & \K &
    \end{tikzcd}
  \end{equation}
  commutes.
\end{newdef}

\begin{remark}
  \label{rem:compatibility}
  The definition of compatible Morita context of \cite[Definition
  3.72]{schommerpries-classification} requires another compatibility
  condition on the coproduct of the unit of the Frobenius
  algebras. However, a calculation using proposition
  \ref{prop:morita-context-compatible-equivalence} shows that the
  condition of \cite{schommerpries-classification} is already implied
  by our condition on Frobenius form of definition
  \ref{def:morita-compatible}; thus the two definitions of compatible
  Morita context do coincide.
\end{remark}

For later use, we give a very explicit way of expressing the
compatibility condition between Morita context and Frobenius forms:
if $(A, \lambda^A)$ and $(B,\lambda^B)$ are two finite-dimensional
semi-simple symmetric Frobenius algebras over an algebraically closed field $\K$, and $({_BM_A}, {_BN_A
},\eps, \eta)$ is a Morita context between them, the algebras $A$ and
$B$ are isomorphic to direct sums of matrix algebras by
Artin-Wedderburn:
\begin{equation}
  A  \cong \bigoplus_{i=1}^{r} M_{d_i}(\K), \qquad \text{and} \qquad
  B  \cong \bigoplus_{j=1}^{r} M_{n_j}(\K).
\end{equation}

By Theorem 3.3.1 of \cite{eghlsvy11}, the simple modules $(S_1,
\ldots, S_r)$ of $A$ and the simple modules $(T_1, \ldots, T_r)$ of
$B$ are given by $S_i:=\K^{d_i}$ and $T_i:= \K^{n_i}$, and every
module is a direct sum of copies of those. Since simple
finite-dimensional representations of $A \ot_\K B^\op$ are given by
tensor products of simple representations of $A$ and $B^\op$ by
Theorem 3.10.2 of \cite{eghlsvy11}, the most general form of $_BM_A$
and $_AN_B$ is given by
\begin{equation}
  \begin{aligned}
    _BM_A:&= \bigoplus_{i,j=1}^{r} \alpha_{ij} \, T_i \ot_\K S_j   \\
    _AN_B:&= \bigoplus_{k,l=1}^{r} \beta_{kl} \, S_k \ot_\K T_l
  \end{aligned}
\end{equation}
where $\alpha_{ij}$ and $\beta_{kl}$ are multiplicities.  First, we
show that the multiplicities are trivial:
\begin{lemma}
  \label{lem:most-general-bimodule}
  In the situation as above, the multiplicities are trivial after a
  possible reordering of the simple modules: $\alpha_{ij}
  =\delta_{ij}=\beta_{ij}$ and the two bimodules $M$ and $N$ are
  actually given by
  \begin{equation}
    \label{eq:most-general-bimodule}
    \begin{aligned}
      _BM_A&= \bigoplus_{i=1}^{r}  T_i \ot_\K S_i   \\
      _AN_B&= \bigoplus_{j=1}^{r} S_j \ot_\K T_j .
    \end{aligned}
  \end{equation}
\end{lemma}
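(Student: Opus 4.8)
The plan is to exploit the fact that a Morita context is in particular an equivalence of categories $M \otimes_A - : A\text{-mod} \to B\text{-mod}$, so that the bimodules $M$ and $N$ must realize a bijection between the simple modules of $A$ and those of $B$. Concretely, I would first recall that from $\eta: B \xrightarrow{\sim} M \otimes_A N$ and $\eps: N \otimes_B M \xrightarrow{\sim} A$ one gets that $-\otimes_A N$ and $-\otimes_B M$ are mutually inverse equivalences between $B\text{-mod}$ and $A\text{-mod}$. An equivalence of semisimple categories induces a bijection on isomorphism classes of simple objects; hence there is a permutation which, after relabelling the indices (i.e.\ reordering the $T_i$), we may assume to be the identity, so that $T_i \otimes_A N \cong S_i$ for all $i$ and $S_j \otimes_B M \cong T_j$ for all $j$.

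Next I would feed the explicit decomposition ${}_BM_A = \bigoplus_{i,j} \alpha_{ij}\, T_i \otimes_\K S_j$ into this constraint. Using that $S_k \otimes_B (T_i \otimes_\K S_j) \cong (S_k \otimes_B T_i) \otimes_\K S_j$ and that $S_k \otimes_B T_i$ is $\K$ if $i=k$ and $0$ otherwise (the $T_i$ being the distinct simples of $B$ and the pairing being the one coming from $T_i = \K^{n_i}$ as a $(B, \text{---})$-bimodule), one computes $S_k \otimes_B M \cong \bigoplus_j \alpha_{kj}\, S_j$. Comparing with $S_k \otimes_B M \cong T_k$ — wait, more precisely one should track this on the correct side; the cleanest bookkeeping is to compute $N \otimes_B M \cong \bigoplus_{j} \beta_{*j}\,\cdots$ and match it against $\eps: N\otimes_B M \cong A = \bigoplus_i S_i \otimes_\K S_i^*$ (identifying $A$ with $\bigoplus_i \End_\K(S_i)$), and similarly $M \otimes_A N \cong B$. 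Doing this dimension-count index by index forces $\sum_k \alpha_{ik}\beta_{kj} = \delta_{ij}$ with all $\alpha, \beta$ non-negative integers, whence each $\alpha$-matrix and $\beta$-matrix is a permutation matrix and they are inverse to each other. Absorbing that single permutation into the chosen ordering of the $T_i$ gives $\alpha_{ij} = \delta_{ij} = \beta_{ij}$, which is exactly \eqref{eq:most-general-bimodule}.

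The main obstacle I anticipate is purely bookkeeping: keeping straight which simple module of $A$ or $B$ appears on which side of the various tensor products over $\K$ versus over $A$ or $B$, and making sure the "orthogonality" $S_k \otimes_B T_i \cong \delta_{ik}\K$ is stated with the bimodule structures that actually occur (this is where Theorems 3.3.1 and 3.10.2 of \cite{eghlsvy11}, already invoked above, do the real work, together with semisimplicity of $A\otimes_\K B^{\op}$). Once the orthogonality relations are set up correctly, the remaining argument is the elementary fact that a factorization of the identity matrix as a product of two non-negative integer matrices forces both to be permutation matrices. I would present the equivalence-of-categories argument as the conceptual heart and relegate the index manipulation to a short explicit computation.
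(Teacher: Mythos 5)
Your argument is correct, but it is not the route the paper takes. The paper argues by contradiction on $M$ alone: if a summand of the form $(T_i \oplus T_j)\otimes_\K S_k$ occurred in the decomposition, one could write down an $A$-module endomorphism of $M$ (acting by an arbitrary nonzero linear map $T_i \to T_j$ on that summand and by zero elsewhere) that cannot arise from left multiplication by an element of $B$, contradicting the Morita-theoretic isomorphism $\End_A({}_BM_A)\cong B$ taken from Bass; the claim for $N$ is handled analogously. You instead use both bimodules and both structure maps at once: decomposing $N\otimes_B M$ and $M\otimes_A N$ via the orthogonality $T_l\otimes_B T_i\cong \delta_{li}\,\K$ and comparing multiplicities of simple bimodule summands with $A\cong\bigoplus_i S_i\otimes_\K S_i^{*}$ and $B\cong\bigoplus_i T_i\otimes_\K T_i^{*}$ gives $\sum_k\beta_{ik}\alpha_{kj}=\delta_{ij}$ (and symmetrically), after which the elementary fact that two non-negative integer matrices multiplying to the identity must both be permutation matrices finishes the argument. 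Your version costs a little more bookkeeping up front (the left/right module structures in the orthogonality relations must be tracked carefully, as you acknowledge), but it treats $M$ and $N$ symmetrically, uses $\eps$ and $\eta$ directly rather than the endomorphism-ring isomorphism, and delivers the full permutation statement --- that the nonzero entries of $\alpha$ genuinely biject the two families of simples, not merely that no column has two entries --- more explicitly than the paper's local contradiction does. Both proofs are valid.
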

\begin{proof}
  Suppose for a contradiction that there is a term of the form $(T_i \oplus
  T_j) \ot S_k$ in the direct sum decomposition of $M$. Let $f: T_i
  \to T_j$ be a non-trivial linear map, and define $\varphi \in \End_A
  ((T_i \oplus T_j) \ot S_k)$ by setting $\varphi((t_i +t_j) \ot
  s_k):=f(t_i) \ot s_k$. The $A$-module map $\varphi$ induces an
  $A$-module endomorphism on all of $_AM_B$ by extending $\varphi$
  with zero on the rest of the direct summands. Since $\End_A(_BM_A)
  \cong B$ as algebras by Theorem 3.5 of \cite{bass}, the endomorphism
  $\varphi$ must come from left multiplication, which cannot be true
  for an arbitrary linear map $f$.  This shows that the bimodule $M$
  is given as claimed in equation
  \eqref{eq:most-general-bimodule}. The statement for the other
  bimodule $N$ follows analogously.
\end{proof}
Lemma \ref{lem:most-general-bimodule} shows how the bimodules
underlying a Morita context of semi-simple algebras look like. Next,
we consider the Frobenius structure.
\begin{lemma}[{\cite[Lemma 2.2.11]{kock2004frobenius}}]
  \label{lem:symmetric-frobenius-form-unique-up-to-multiple}
  Let $(A, \lambda)$ be a symmetric Frobenius algebra. Then, every
  other symmetric Frobenius form on $A$ is given by multiplying the
  Frobenius form with a central invertible element of $A$.
\end{lemma}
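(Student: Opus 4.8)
The plan is to use the standard correspondence between Frobenius forms and non-degenerate invariant bilinear pairings, and then leverage the defining property of a Frobenius algebra — that $A$ is free of rank one as a module over itself via the form — to show that any two forms differ by multiplication by a unit, with symmetry forcing that unit to be central.

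First I would recall that a linear functional $\mu \colon A \to \K$ is a Frobenius form precisely when the induced pairing $(a,b) \mapsto \mu(ab)$ is non-degenerate; equivalently, the map $a \mapsto \mu(a \cdot {-})$ is an isomorphism of right $A$-modules $A \xrightarrow{\sim} A^\vee := \Hom_\K(A,\K)$, where $A^\vee$ carries the right $A$-action dual to left multiplication. Now suppose $\lambda$ and $\mu$ are both symmetric Frobenius forms. Since $\lambda$ is a Frobenius form, the right-module map $\Phi_\lambda \colon A \to A^\vee$, $a \mapsto \lambda(a\cdot{-})$, is an isomorphism, so there is a unique element $z \in A$ with $\Phi_\lambda(z) = \Phi_\mu(1)$, i.e. $\lambda(za) = \mu(a)$ for all $a \in A$; moreover $z$ is invertible because $\Phi_\mu$ is itself an isomorphism and $\Phi_\mu = \Phi_\lambda \circ (z \cdot {-})$, so $z \cdot {-}$ is bijective on $A$, hence $z$ is a unit. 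Thus $\mu = \lambda(z\,{-})$ for an invertible $z$.

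It remains to show $z$ is central. Here I would use that both $\lambda$ and $\mu$ are \emph{symmetric}: for all $a, b \in A$ we have $\mu(ab) = \mu(ba)$, which unwinds to $\lambda(zab) = \lambda(zba)$. On the other hand, symmetry of $\lambda$ gives $\lambda(zba) = \lambda(azb)$. Combining, $\lambda(zab) = \lambda(azb)$ for all $a,b$, i.e. $\lambda\big((za - az)b\big) = 0$ for all $b$; since the pairing associated to $\lambda$ is non-degenerate, $za - az = 0$ for all $a$, so $z$ is central. Conversely, if $z$ is a central unit then $a \mapsto \lambda(za)$ is visibly linear, its associated pairing $(a,b)\mapsto \lambda(zab)$ is non-degenerate (it is the composite of the non-degenerate pairing of $\lambda$ with the bijection $a \mapsto za$), and symmetry follows from centrality of $z$ together with symmetry of $\lambda$: $\lambda(zab) = \lambda(azb) = \lambda(zba)$. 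Hence every symmetric Frobenius form arises this way.

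The only genuinely delicate point is the verification that the element $z$ produced from the module isomorphisms is actually invertible rather than merely some element realizing the change of forms; this is where one must use that $\mu$ is \emph{also} a Frobenius form, not just a linear functional — otherwise $z$ could be a zero divisor. Everything else is a routine unwinding of definitions, and one can alternatively simply cite \cite[Lemma 2.2.11]{kock2004frobenius} as is done in the statement.
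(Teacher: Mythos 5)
Your proof is correct and complete; the paper itself gives no argument for this lemma, simply citing \cite[Lemma 2.2.11]{kock2004frobenius}, and your reasoning is the standard one found there: use the right-module isomorphism $A \xrightarrow{\sim} A^\vee$ induced by $\lambda$ to produce $z$ with $\mu = \lambda(z\,\cdot\,{-})$, deduce invertibility from $\Phi_\mu = \Phi_\lambda \circ (z\cdot{-})$, and extract centrality from symmetry plus non-degeneracy. The one implicit ingredient worth flagging is finite-dimensionality of $A$ (part of the definition of a Frobenius algebra), which is what makes the injective module map $\Phi_\lambda$ surjective and turns the one-sided inverse of $z$ into a two-sided one.
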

By Lemma \ref{lem:symmetric-frobenius-form-unique-up-to-multiple}, we
conclude that the Frobenius forms on the two semi-simple algebras $A$
and $B$ are given by
\begin{equation}
  \label{eq:frob-form-ssi-alg}
  \lambda^A = \bigoplus_{i=1}^{r} \lambda^A_i \, \tr_{M_{d_i}(\K)} \qquad \text{and} \qquad 
  \lambda^B =\bigoplus_{i=1}^{r} \lambda^B_i \tr_{M_{n_i}(\K)}
\end{equation}
where $\lambda^A_i$ and $\lambda^B_i$ are non-zero scalars. We can now
state the following proposition, which will be used in the proof of
corollary \ref{cor:frob-fixed-points}.
\begin{prop}
  \label{prop:morita-context-compatible-equivalence}
  Let $(A, \lambda^A)$ and $(B, \lambda^B)$ be two finite-dimensional,
  semi-simple symmetric Frobenius algebras and suppose that
  $\cM:=(M,N,\eps,\eta)$ is a Morita context between them. Let
  $\lambda_i^A$ and $\lambda_j^B$ be as in equation
  \eqref{eq:frob-form-ssi-alg}, and define two invertible central elements 
  \begin{equation}
    \begin{aligned}
      a:&=(\lambda_1^A, \ldots, \lambda_r^A) \in \K^r \cong Z(A) \\
      b:&=(\lambda_1^B, \ldots, \lambda_r^B) \in \K^r \cong Z(B)
    \end{aligned}
  \end{equation}
  Then, the following are equivalent:
  \begin{enumerate}
  \item The Morita context $\cM$ is compatible with the Frobenius forms in
    the sense of definition \ref{def:morita-compatible}.
  \item We have $m.a=b.m$ for all $m \in {_BM_A}$ and
    $n.b^{-1}=a^{-1}.n$ for all $n \in {_AN_B}$.
  \item For every $i=1, \ldots, r$, we have that
    $\lambda_i^A=\lambda_i^B$.
  \end{enumerate}
\end{prop}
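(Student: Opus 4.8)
The plan is to prove the equivalences by showing $(3) \Rightarrow (2) \Rightarrow (1)$ and then $(1) \Rightarrow (3)$, using the very explicit description of the bimodules $M$ and $N$ from Lemma \ref{lem:most-general-bimodule}. First I would unpack what the central elements $a \in Z(A) \cong \K^r$ and $b \in Z(B) \cong \K^r$ do on the simple summands: under the identification $Z(A) \cong \bigoplus_i \K$, the element $a = (\lambda_1^A, \dots, \lambda_r^A)$ acts on the simple module $S_i = \K^{d_i}$ by the scalar $\lambda_i^A$, and similarly $b$ acts on $T_j$ by $\lambda_j^B$. By Lemma \ref{lem:most-general-bimodule} we may assume $_BM_A = \bigoplus_i T_i \ot_\K S_i$ and $_AN_B = \bigoplus_j S_j \ot_\K T_j$. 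Hence on the summand $T_i \ot_\K S_i$ of $M$, right multiplication by $a$ is multiplication by $\lambda_i^A$ and left multiplication by $b$ is multiplication by $\lambda_i^B$; so $m.a = b.m$ for all $m$ is equivalent to $\lambda_i^A = \lambda_i^B$ for all $i$, which is exactly condition $(3)$. The same computation on $N$ gives $n.b^{-1} = a^{-1}.n$ iff $\lambda_i^A = \lambda_i^B$ for all $i$. This establishes $(2) \Leftrightarrow (3)$ with essentially no effort beyond bookkeeping.

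The remaining and more substantial task is to connect these conditions to the Frobenius-form compatibility of Definition \ref{def:morita-compatible}, i.e. to the statement that $\lambda^B \circ f = \lambda^A$ where $f : A/[A,A] \to B/[B,B]$ is the canonical isomorphism of Lemma \ref{lem:A/[A,A]-B[B,B]-iso}. Here I would work in the basis of $A/[A,A]$ given by the classes $[e_i]$ of a rank-one idempotent in the $i$-th matrix block $M_{d_i}(\K)$; then $\lambda^A([e_i]) = \lambda_i^A$ by \eqref{eq:frob-form-ssi-alg}, and similarly for $B$. The key step is to compute $f([e_i])$ explicitly using the formula $f([a]) = \sum_{i,j}[\eta^{-1}(m_j.a \ot n_i)]$, where $\eps^{-1}(1_A) = \sum n_i \ot m_j$. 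For this I need to understand $\eps^{-1}(1_A) \in N \ot_B M$ and the map $\eta^{-1}$ in terms of the decompositions \eqref{eq:most-general-bimodule}. Since $\eps : N \ot_B M \to A$ and $\eta : B \to M \ot_A N$ are bimodule isomorphisms between algebras that decompose blockwise, they must respect the block decomposition (up to the reordering already fixed in Lemma \ref{lem:most-general-bimodule}), so $\eps$ restricted to the $i$-th summand $S_i \ot_\K T_i \ot_B T_i \ot_\K S_i$ lands in $M_{d_i}(\K)$, and dually for $\eta$. A direct trace computation — using that the pairing $S_i \ot T_i \to \K$ and $T_i \ot S_i \to$ (scalars) induced by $\eps, \eta$ differ by the scalars $\lambda_i^A, \lambda_i^B$ — should yield $f([e_i]) = (\lambda_i^A / \lambda_i^B)\,[\bar e_i]$ for a suitable rank-one idempotent $\bar e_i$ in the $i$-th block of $B$. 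Then $\lambda^B(f([e_i])) = (\lambda_i^A/\lambda_i^B)\lambda_i^B = \lambda_i^A = \lambda^A([e_i])$ precisely when $\lambda_i^A = \lambda_i^B$, giving $(1) \Leftrightarrow (3)$.

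I expect the main obstacle to be the explicit bookkeeping in the last computation: tracking how the various canonical isomorphisms — $\eps$, $\eta$, the switch map $\tau$ of \eqref{eq:tensor-product-switch}, and the identification \eqref{eq:A/[A,A]-iso-A-ot-A} — interact with the block decomposition, and in particular pinning down the scalar relating $\eps^{-1}(1_A)$ to the canonical coevaluation element $\sum_k s_k \ot s_k^*$ on each simple summand. A clean way to organize this is to first reduce to the case $r = 1$, i.e. $A = M_d(\K)$, $B = M_n(\K)$ (since everything is blockwise, and $\eps, \eta$ preserve blocks); there the Morita context is the standard one between $M_d(\K)$ and $M_n(\K)$ coming from $\K^d \ot \K^n$, forcing $d = n$, and one only has to chase a single scalar through the chain of isomorphisms in the proof of Lemma \ref{lem:A/[A,A]-B[B,B]-iso}. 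Once the scalar is identified as $\lambda^A / \lambda^B$ on each block, all three equivalences follow. I would also remark that the auxiliary computation just performed is what underlies the claim in Remark \ref{rem:compatibility} that our compatibility condition implies the coproduct condition of \cite{schommerpries-classification}.
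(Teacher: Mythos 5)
Your reduction of $(2)\Leftrightarrow(3)$ to the blockwise action of $a$ and $b$ on the summands $T_i\ot_\K S_i$ is exactly what the paper does and is fine. The problem is in your $(1)\Leftrightarrow(3)$ step, and it is a genuine one. The isomorphism $f:A/[A,A]\to B/[B,B]$ of Lemma \ref{lem:A/[A,A]-B[B,B]-iso} is built solely from the Morita context data $(M,N,\eps,\eta)$; it has no knowledge of the Frobenius forms, so your anticipated formula $f([e_i])=(\lambda_i^A/\lambda_i^B)[\bar e_i]$ cannot be correct --- the ratio $\lambda_i^A/\lambda_i^B$ has no way of entering the computation, and your heuristic that ``the pairings induced by $\eps,\eta$ differ by the scalars $\lambda_i^A,\lambda_i^B$'' conflates the Morita data with the Frobenius data. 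Worse, even if one grants that formula, your displayed chain $\lambda^B(f([e_i]))=(\lambda_i^A/\lambda_i^B)\lambda_i^B=\lambda_i^A=\lambda^A([e_i])$ holds \emph{unconditionally}, so it would show that every Morita context is compatible; the qualifier ``precisely when $\lambda_i^A=\lambda_i^B$'' is not supported by the computation you wrote, and the argument as stated proves the wrong thing.

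The correct version of this step, which is what the paper carries out, is: by Lemma \ref{lem:most-general-bimodule} the only bimodule isomorphisms $\eps,\eta$ are scalars $\eps_i,\eta_i$ on each block, the triangle identities of the Morita context tie these scalars together, and they cancel in the formula $f([a])=\sum_{i,j}[\eta^{-1}(m_j.a\ot n_i)]$ (one factor from $\eps^{-1}(1_A)$, one from $\eta^{-1}$). The upshot is that $f$ is \emph{independent} of $\eps$ and $\eta$ and, under the identifications $A/[A,A]\cong\K^r\cong B/[B,B]$ by normalized traces, is simply the identity: $f([e_i])=[\bar e_i]$ with no scalar. Hence $\lambda^B\circ f=\lambda^A$ reads $\lambda_i^B=\lambda_i^A$ for all $i$, which is $(3)$. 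Your proposed reduction to a single block is reasonable in spirit, but note the side claim that the Morita context between $M_d(\K)$ and $M_n(\K)$ forces $d=n$ is false: all matrix algebras over $\K$ are Morita equivalent via $\K^n\ot_\K\K^d$, and indeed the statement of the proposition is interesting precisely because $d_i\neq n_i$ is allowed.
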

\begin{proof}
  With the form of $M$ and $N$ determined by equation
  \eqref{eq:most-general-bimodule}, we see that the only isomorphisms
  of bimodules $\eps: N \ot_B M \to A$ and $\eta: B \to M \ot_A N$
  must be given by multiples of the identity matrix on each direct
  summand:
  \begin{equation}
    \begin{aligned}
      \eps: N \ot_A M \cong \bigoplus_{i=1}^r M(d_i \times d_i, \K) & \to  \bigoplus_{i=1}^r M(d_i \times d_i, \K) =A \\
      \sum_{i=1}^r M_i & \mapsto \sum_{i=1}^r \eps_i M_i
    \end{aligned}
  \end{equation}
  Similarly, $\eta$ is given by
  \begin{equation}
    \begin{aligned}
      \eta:  B = \bigoplus_{i=1}^r M(n_i \times n_i , \K) &\mapsto M \ot_A B \cong \bigoplus_{i=1}^r M(n_i \times n_i , \K) \\
      \sum_{i=1}^r M_i & \mapsto \sum_{i=1}^r \eta_i M_i
    \end{aligned}
  \end{equation}
  Here, $\eps_i$ and $\eta_i$ are non-zero scalars. The condition that
  this data should be a Morita context then demands that $\eps_i =
  \eta_i$, as a short calculation in a basis confirms. By calculating
  the action of the elements $a$ and $b$ defined above in a basis, we
  see that conditions $(2)$ and $(3)$ of the above proposition are
  equivalent.

  Next, we show that $(1)$ and $(3)$ are equivalent. In order to see
  when the Morita context is compatible with the Frobenius forms, we
  calculate the map $f:A/[A,A] \to B/[B,B]$ from equation
  \eqref{eq:morita-compatible}. One way to do this is to notice that
  $[A,A]$ consists precisely of trace-zero matrices
  (cf. \cite{albert1957}); thus
  \begin{equation}
    \begin{aligned}
      A/[A,A] &\to \K^r \\
      [ A_1 \oplus A_2 \oplus \cdots \oplus A_r]& \mapsto \left(
        \tr(A_1), \cdots, \tr(A_r) \right)
    \end{aligned}
  \end{equation}
  is an isomorphism of vector spaces.  Using this identification, we
  see that the map $f$ is given by
  \begin{equation}
    \begin{aligned}
      f:A/[A,A] & \to B/[B,B] \\
      [ A_1 \oplus A_2 \oplus \cdots \oplus A_r] & \mapsto
      \bigoplus_{i=1}^r \tr_{M_{d_i}}(A_i) \left[ E^{(n_i \times
          n_i)}_{11} \right]
    \end{aligned}
  \end{equation}
  Note that this map is independent of the scalars $\eps_i$ and
  $\eta_i$ coming from the Morita context. Now, the two Frobenius
  algebras $A$ and $B$ are Morita equivalent via a compatible Morita
  context if and only if the diagram in equation
  \eqref{eq:morita-compatible} commutes. This is the case if and only
  if $\lambda^A_i = \lambda^B_i$ for all $i$, as a straightforward
  calculation in a basis shows.
\end{proof}
Having established how compatible Morita contexts between semi-simple
algebras over an algebraic closed field look like, we arrive at
following definition.
\begin{newdef}
  \label{def:frob-bicategory}
  Let $\K$ be an algebraically closed field.  Let $\Frob$ be the
  bicategory where
  \begin{itemize}
  \item objects are given by finite-dimensional, semisimple, symmetric
    Frobenius $\K$-algebras,
  \item 1-morphisms are given by compatible Morita contexts, as in
    definition \ref{def:morita-compatible},
  \item 2-morphisms are given by isomorphisms of Morita contexts.
  \end{itemize}
  Note that $\Frob$ has got the structure of a symmetric monoidal
  bigroupoid, where the monoidal product is given by the tensor
  product over the ground field, which is the monoidal unit.
\end{newdef}
The bicategory $\Frob$ will be relevant for the remainder of the
paper, due to the following theorem.
\begin{theorem}[Oriented version of the Cobordism Hypothesis,
  \cite{schommerpries-classification}]
  The weak 2-functor
  \begin{equation}
    \begin{aligned}
      \Fun_\ot(\Cob_{2,1,0}^\ori, \Alg_2) & \to \Frob \\
      Z &\mapsto Z(+)
    \end{aligned}
  \end{equation}
  is an equivalence of bicategories.
\end{theorem}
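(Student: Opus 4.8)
The plan is to deduce the statement from a generators-and-relations presentation of the symmetric monoidal bicategory $\Cob_{2,1,0}^\ori$, and then to unwind what a symmetric monoidal $2$-functor out of it --- together with its monoidal transformations and modifications --- amounts to when the target is $\Alg_2$. This is the route taken in \cite{schommerpries-classification}.

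First I would analyze the object part. The positively-oriented point $+$ is a generator of $\Cob_{2,1,0}^\ori$, so a symmetric monoidal $2$-functor $Z$ is determined on objects by $A := Z(+)$, an object of $\Alg_2$, i.e.\ a $\K$-algebra. Since $+$ is dualizable with dual $-$, and since in $\Cob_{2,1,0}^\ori$ every $1$-morphism admits both adjoints, the evaluation and coevaluation $1$-manifolds are sent to bimodules exhibiting $A^\op$ as a dual of $A$ in such a way that $A$ is forced to be finite-dimensional and separable; as $\K$ is algebraically closed this means $A$ is finite-dimensional and semisimple. The elementary surfaces with corners then supply the remaining structure: the pair of pants gives a multiplication on $A$ compatible with the one already present, the disk regarded as a bordism from the circle to the empty $1$-manifold gives a linear form $\lambda \colon A \to \K$, and the relations among these generating $2$-morphisms (associativity, the Frobenius relation exchanging pairs of pants, and the relation forcing $\lambda$ to be symmetric) translate exactly into the assertion that $(A,\lambda)$ is a symmetric Frobenius algebra. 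This identifies objects of $\Fun_\ot(\Cob_{2,1,0}^\ori,\Alg_2)$ with objects of $\Frob$.

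Next I would treat the higher cells in the same spirit. A monoidal pseudonatural transformation between two such $2$-functors $Z$ and $Z'$ is, on the generator, a $1$-morphism $Z(+) \to Z'(+)$ in $\Alg_2$, i.e.\ a bimodule; naturality with respect to the cup and cap forces it to be invertible, hence part of a Morita context, and naturality with respect to the disk bordism is precisely the compatibility of Definition \ref{def:morita-compatible} between that Morita context and the two Frobenius forms. Modifications unwind analogously to intertwiners of Morita contexts. It then remains to check that $Z \mapsto Z(+)$ is compatible with composition and with the symmetric monoidal structures, and that it is essentially surjective as well as fully faithful on $1$- and on $2$-morphisms; this gives the equivalence.

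The hard part is the first ingredient: establishing the generators-and-relations presentation of $\Cob_{2,1,0}^\ori$ as a symmetric monoidal bicategory. This requires a Morse- and Cerf-theoretic study of generic families of surfaces with corners --- classifying the local singularities that can appear (folds, cusps, swallowtails) and proving that the resulting finite list of generating $2$-morphisms and relations is complete, so that any two symmetric monoidal $2$-functors agreeing on this data up to coherence are equivalent. Once that presentation is available, the translation into the language of $\Frob$ sketched above is essentially bookkeeping; in particular the explicit description of compatible Morita contexts between semisimple symmetric Frobenius algebras developed earlier in this section (Lemma \ref{lem:most-general-bimodule} and Proposition \ref{prop:morita-context-compatible-equivalence}) is exactly what is needed to match the two bicategories on the nose.
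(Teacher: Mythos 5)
The paper does not prove this theorem at all: it is imported verbatim from \cite{schommerpries-classification} and used as a black box, so there is no internal proof to compare against. Your outline is an accurate summary of the strategy of that reference --- a generators-and-relations presentation of $\Cob_{2,1,0}^\ori$ as a symmetric monoidal bicategory obtained by Morse/Cerf theory for surfaces with corners, followed by the translation of that presentation into the data of a finite-dimensional separable (hence, over an algebraically closed field, semisimple) algebra with a symmetric Frobenius form, with compatible Morita contexts and their isomorphisms appearing as the transformations and modifications. You also correctly locate where the real work lies.

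That said, be aware that what you have written is a plan rather than a proof: the entire mathematical content is concentrated in the presentation theorem, which you only name, and in the (nontrivial) bookkeeping that matches the generating $2$-morphisms and their relations to the symmetric Frobenius axioms and matches the naturality squares for the generating $1$- and $2$-morphisms to the compatibility condition of Definition \ref{def:morita-compatible}. One small point worth flagging in that bookkeeping: the compatibility condition appearing in \cite[Definition 3.72]{schommerpries-classification} is stated differently from the one in Definition \ref{def:morita-compatible} (it involves a condition on the coproduct of the unit), and the present paper needs Proposition \ref{prop:morita-context-compatible-equivalence} and Remark \ref{rem:compatibility} to see that the two agree; a proof along your lines would land on Schommer-Pries's version of the condition first and would then need that comparison to conclude that the target is the bicategory $\Frob$ as defined here.
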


\section{Group actions on bicategories and their homotopy fixed
  points}
\label{sec:group-actions-bicat}
For a group $G$, we denote with $\B G$ the category with one object
and $G$ as morphisms. Similarly, if $\cC$ is a monoidal category,
$\B\cC$ will denote the bicategory with one object and $\cC$ as
endomorphism category of this object.  Furthermore, we denote by
$\ubar G$ the discrete monoidal category associated to $G$, i.e. the
category with the elements of $G$ as objects, only identity morphisms,
and monoidal product given by group multiplication.

Recall that an action of a group $G$ on a set $X$ is a group
homomorphism $\rho: G \to \Aut(X)$. The set of fixed points $X^G$ is
then defined as the set of all elements of $X$ which are invariant
under the action. In equivalent, but more categorical terms, a
$G$-action on a set $X$ can be defined to be a functor $\rho:\B{G} \to
\Set$ which sends the one object of the category $\B{G}$ to the set
$X$.

If $\Delta : \B{G} \to \Set$ is the constant functor sending the one
object of $\B G$ to the set with one element, one can check that the
set of fixed points $X^G$ stands in bijection to the set of natural
transformations from the constant functor $\Delta$ to $\rho$, which is
exactly the limit of the functor $\rho$. Thus, we have bijections of
sets
\begin{equation}
  \label{eq:fixed-points-are-natural-trafos}
  X^G \cong \lim_{*//G} \rho \cong \Nat(\Delta ,\rho).
\end{equation}
\begin{remark}
  \label{rem:action-as-monoidal-functor}
  A further equivalent way of providing a $G$-action on a set $X$ is
  by giving a monoidal functor $\rho: \ubar{G} \to \ubar{\Aut}(X)$,
  where we regard both $G$ and $\Aut(X)$ as categories with only
  identity morphisms. This definition however does not allow us to
  express the set of homotopy fixed points in a nice categorical way
  as in equation \eqref{eq:fixed-points-are-natural-trafos}, and thus
  turns out to be less useful for our purposes.
\end{remark}
Categorifying the notion of a $G$-action on a set yields the
definition of a discrete group acting on a category:
\begin{newdef}
  \label{def:action-on-cat}
  Let $G$ be a discrete group and let $\cC$ be a category. Let
  $\B{\ubar G}$ be the 2-category with one object and ${\ubar G}$ as
  the category of endomorphisms of the single object. A $G$-action on
  $\cC$ is defined to be a weak 2-functor $\rho: B{\ubar G} \to \Cat$
  with $\rho(*)=\cC$.
\end{newdef}

Note that just as in remark \ref{rem:action-as-monoidal-functor}, we could have avoided the language of 2-categories and have defined a $G$-action on a category $\cC$ to be a monoidal
functor $\rho: \ubar{G} \to \Aut(\cC)$.

Next, we would like to define the homotopy fixed point category of
this action to be a suitable limit of the action, just as in equation
\eqref{eq:fixed-points-are-natural-trafos}. The appropriate notion of
a limit of a weak 2-functor with values in a bicategory appears in the
literature as a \emph{pseudo-limit} or \emph{indexed limit}, which we
will simply denote by $\lim$. We will only consider limits indexed by
the constant functor. For background, we refer the reader to
\cite{lack-2-cat-companion}, \cite{kelly-elementary},
\cite{street-fibs-bicats} and \cite{street-fibs-bicats-correction}.

We are now in the position to introduce the following definition:
\begin{newdef}
  Let $G$ be a discrete group, let $\cC$ be a category, and let $\rho:
  \B{\ubar G} \to \Cat$ be a $G$-action on $\cC$. Then, the category
  of homotopy fixed points $\cC^G$ is defined to be the pseudo-limit
  of $\rho$.
\end{newdef}
Just as in the 1-categorical case in equation
\eqref{eq:fixed-points-are-natural-trafos}, it is shown in
\cite{kelly-elementary} that the limit of any weak 2-functor with
values in $\Cat$ is equivalent to the category of pseudo-natural
transformations and modifications $\Nat(\Delta, \rho)$ . Hence, we
have an equivalence of categories
\begin{equation}
  \label{eq:fixed-points-cat-natural-trafos}
  \cC^G \cong \lim \rho \cong \Nat(\Delta, \rho).
\end{equation}
Here, $\Delta: \B{\ubar G} \to \Cat$ is the constant functor sending
the one object of $\B{\ubar G}$ to the terminal category with one
object and only the identity morphism.  By spelling out definitions,
one sees:
\begin{remark}
  Let $\rho: \B{\ubar G} \to \Cat$ be a $G$-action on a category
  $\cC$, and suppose that $\rho(e)=\id_\cC$, i.e. the action respects
  the unit strictly. Then, the homotopy fixed point category $\cC^G$
  is equivalent to the \enquote{equivariantization} introduced in
  \cite[Definition 2.7.2]{egno-tensor-book}.
\end{remark}

\subsection{G-actions on bicategories} Next, we would like to step up
the categorical ladder once more, and define an action of a group $G$
on a bicategory. Moreover, we would also like to account for the case
where our group is equipped with a topology. This will be done by
considering the fundamental 2-groupoid of $G$, referring the reader to
\cite{hkk-fundamental} for additional details.

\begin{newdef}
  Let $G$ be a topological group. The fundamental 2-groupoid of $G$ is
  the monoidal bicategory $\Pi_2(G)$ where
  \begin{itemize}
  \item objects are given by points of $G$,
  \item 1-morphisms are given by paths between points,
  \item 2-morphisms are given by homotopy classes of homotopies
    between paths, called \emph{2-tracks}.
  \end{itemize}
  The monoidal product of $\Pi_2(G)$ is given by the group
  multiplication on objects, by pointwise multiplication of paths on
  1-morphisms, and by pointwise multiplication of 2-tracks on
  2-morphisms. Notice that this monoidal product is associative on the
  nose, and all other monoidal structure like associators and unitors
  can be chosen to be trivial.
\end{newdef}
We are now ready to give a definition of a $G$-action on a
bicategory. Although the definition we give uses the language of
tricategories as defined in \cite{gps95} or \cite{gur07}, we provide a
bicategorical description in Remark \ref{rem:unpacking-action}.
\begin{newdef}
  \label{def:action-on-bicat}
  Let $G$ be a topological group, and let $\cC$ be a bicategory. A
  $G$-action on $\cC$ is defined to be a trifunctor
  \begin{equation}
    \rho: \B{\Pi_2(G)} \to \Bicat
  \end{equation}
  with $\rho(*)=\cC$.  Here, $\B{\Pi_2(G)}$ is the tricategory with
  one object and with $\Pi_2(G)$ as endomorphism-bicategory, and
  $\Bicat$ is the tricategory of bicategories.
\end{newdef}

\begin{remark}
  If $\cC$ is a bicategory, let $\Aut(\cC)$ be the bicategory
  consisting of auto-equivalences of bicategories of $\cC$,
  pseudo-natural isomorphisms and invertible modifications. Observe
  that $\Aut(\cC)$ has the structure of a monoidal bicategory, where
  the monoidal product is given by composition. Since there are two
  ways to define the horizontal composition of pseudo-natural
  transformation, which are \emph{not} equal to each other, there are
  actually two monoidal structures on $\Aut(\cC)$. It turns out that
  these two monoidal structures are equivalent; see \cite[Section
  5]{gps95} for a discussion in the language of
  tricategories. 

  With either monoidal structure of $\Aut(\cC)$ chosen, note that as
  in Remark \ref{rem:action-as-monoidal-functor} we could equivalently
  have defined a $G$-action on a bicategory $\cC$ to be a weak
  monoidal 2-functor $ \rho: \Pi_2(G) \to \Aut(\cC)$.
\end{remark}

Since we will only consider trivial actions in this paper, the hasty
reader may wish to skip the next remark, in which the definition of a
$G$-action on a bicategory is unpacked. We will, however use the
notation introduced here in our explicit description of homotopy fixed
points in remark \ref{rem:unpacking-fixed-point}.
\begin{remark}[Unpacking Definition \ref{def:action-on-bicat}]
  \label{rem:unpacking-action}
  Unpacking the definition of a weak monoidal 2-functor $\rho:
  \Pi_2(G) \to \Aut(\cC)$, as for instance in \cite[Definition
  2.5]{schommerpries-classification}, or equivalently of a trifunctor
  $\rho: \B{\Pi_2(G)} \to \Bicat$, as in \cite[Definition 3.1]{gps95},
  shows that a $G$-action on a bicategory $\cC$ consists of the
  following data and conditions:
  \begin{itemize}
  \item For each group element $g \in G$, an equivalence of
    bicategories $F_g:=\rho(g) : \cC \to
    \cC$,
  \item For each path $\gamma : g \to h$ between two group elements,
    the action assigns a pseudo-natural isomorphism $\rho(\gamma): F_g
    \to F_h$, 
  \item For each 2-track $m : \gamma \to \gamma'$, the action assigns
    an invertible modification $\rho(m) : \rho(\gamma) \to
    \rho(\gamma')$.
  \item There is additional data making $\rho$ into a weak 2-functor,
    namely: if $\gamma_1 : g \to h$ and $\gamma_2 : h \to k$ are paths
    in $G$, we obtain invertible modifications
    \begin{equation}
      \phi_{\gamma_2 \gamma_1}: \rho(\gamma_2) \circ \rho(\gamma_1) \to \rho(\gamma_2 \circ \gamma_1) 
    \end{equation}
  \item Furthermore, for every $g \in G$ there is an invertible
    modification $\phi_g: \id_{F_g} \to \rho(\id_g)$ between the
    identity endotransformation on $F_g$ and the value of $\rho$ on
    the constant path $\id_g$.

    There are three compatibility conditions for this data: one
    condition making $\phi_{\gamma_2, \gamma_1}$ compatible with the
    associators of $\Pi_2(G)$ and $\Aut(\cC)$, and two conditions with
    respect to the left and right unitors of $\Pi_2(G)$ and
    $\Aut(\cC)$.
  \item Finally, there are data and conditions for $\rho$ to be
    monoidal. These are:
    \begin{itemize}
    \item A pseudo-natural isomorphism
      \begin{equation}
        \chi: \rho(g) \ot \rho(h) \to
        \rho(g \ot h)
      \end{equation}
    \item A pseudo-natural isomorphism
      \begin{equation}
        \label{eq:iota-action}
        \iota: \id_\cC \to F_e
      \end{equation}
    \item For each triple $(g,h,k)$ of group elements, an invertible
      modification $\omega_{ghk}$ in the diagram
      \begin{equation}
        \label{eq:omega}
        \vcenter{\hbox{\includegraphics{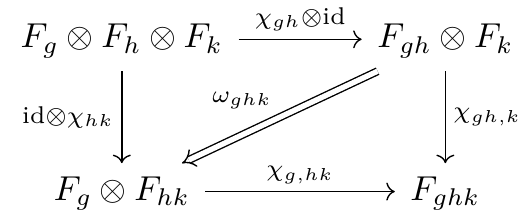}}}
      \end{equation}
    \item An invertible modification $\gamma$ in the triangle below
      \begin{equation}
        \label{eq:gamma}
        \vcenter{\hbox{\includegraphics{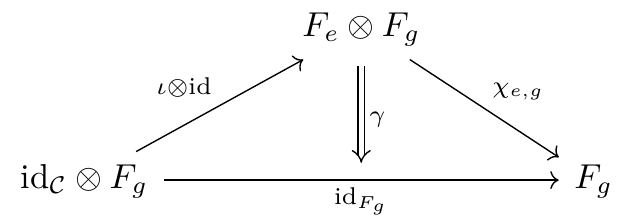}}}
      \end{equation}
    \item Another invertible modification $\delta$ in the triangle
      \begin{equation}
        \label{eq:delta}
        \vcenter{\hbox{\includegraphics{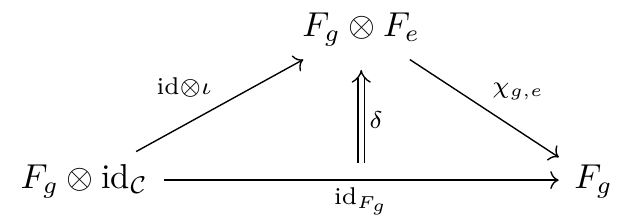}}}
      \end{equation}
    \end{itemize}
  \end{itemize}
  The data $(\rho,\chi, \iota, \omega, \gamma, \delta)$ then has to
  obey equations (HTA1) and (HTA2) in \cite[p. 17]{gps95}.
\end{remark}

Just as in the case of a group action on a set and a group action on a
category, we would like to define the bicategory of homotopy fixed
points of a group action on a bicategory as a suitable limit. However,
the theory of trilimits is not very well established. Therefore we
will take the description of homotopy fixed points as natural
transformations as in equation
\eqref{eq:fixed-points-are-natural-trafos} as a definition, and define
homotopy fixed points of a group action on a bicategory as the
bicategory of pseudo-natural transformations between the constant
functor and the action.
\begin{newdef}
  \label{def:fixed-point}
  Let $G$ be a topological group and $\cC$ a bicategory.  Let
  \begin{equation}
    \rho: \B{\Pi_2(G)} \to \Bicat
  \end{equation}
  be a $G$-action on $\cC$.  The bicategory of homotopy fixed points
  $\cC^G$ is defined to be
  \begin{equation}
    \cC^G:=\Nat(\Delta, \rho)
  \end{equation}
  Here, $\Delta$ is the constant functor which sends the one object of
  $\B{\Pi_2(G)}$ to the terminal bicategory with one object, only the
  identity 1-morphism and only identity 2-morphism. The bicategory
  $\Nat(\Delta, \rho)$ then has objects given by tritransformations
  $\Delta \to \rho$, 1-morphisms are given by modifications, and
  2-morphisms are given by perturbations.
\end{newdef}
\begin{remark}
  The notion of the \enquote{equivariantization} of a strict 2-monad
  on a 2-category has already appeared in \cite[Section
  6.1]{mn14}. Note that definition \ref{def:fixed-point} is more
  general than the definition of \cite{mn14}, in which some
  modifications have been assumed to be trivial.
\end{remark}
\begin{remark}
  In principle, even higher-categorical definitions are possible: for
  instance in \cite{fiva15} a homotopy fixed point of a higher
  character $\rho$ of an $\infty$-group is defined to be a (lax)
  morphism of $\infty$-functors $\Delta \to \rho$.
\end{remark}
\begin{remark}[Unpacking objects of $\cC^G$]
  \label{rem:unpacking-fixed-point}
  Since unpacking the definition of homotopy fixed points is not
  entirely trivial, we spell it out explicitly in the subsequent
  remarks, following \cite[Definition 3.3]{gps95}. In the language of
  bicategories, a homotopy fixed point consists of:
  \begin{itemize}
  \item an object $c$ of $\cC$,
  \item a pseudo-natural equivalence
    \begin{equation}
      \vcenter{\hbox{\includegraphics{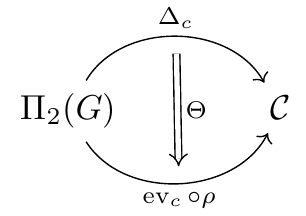}}} 
    \end{equation}
    where $\Delta_c$ is the constant functor which sends every object to $c \in \cC$, and $\ev_c$ is the evaluation at the object $c$.\\
    In components, the pseudo-natural transformation $ \Theta$
    consists of the following:
    \begin{itemize}
    \item for every group element $g \in G$, a 1-equivalence in $\cC$
      \begin{equation}
        \Theta_g:c \to F_g(c)
      \end{equation}
    \item and for each path $\gamma: g \to h$, an invertible
      2-morphism $\Theta_\gamma$ in the diagram 
      \begin{equation}
        \vcenter{\hbox{\includegraphics{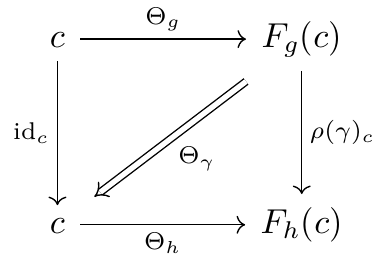}}} 
      \end{equation}
      which is natural with respect to 2-tracks.
    \end{itemize}
  \item an invertible modification $\Pi$ in the diagram 
    \begin{equation}
      \vcenter{\hbox{\includegraphics{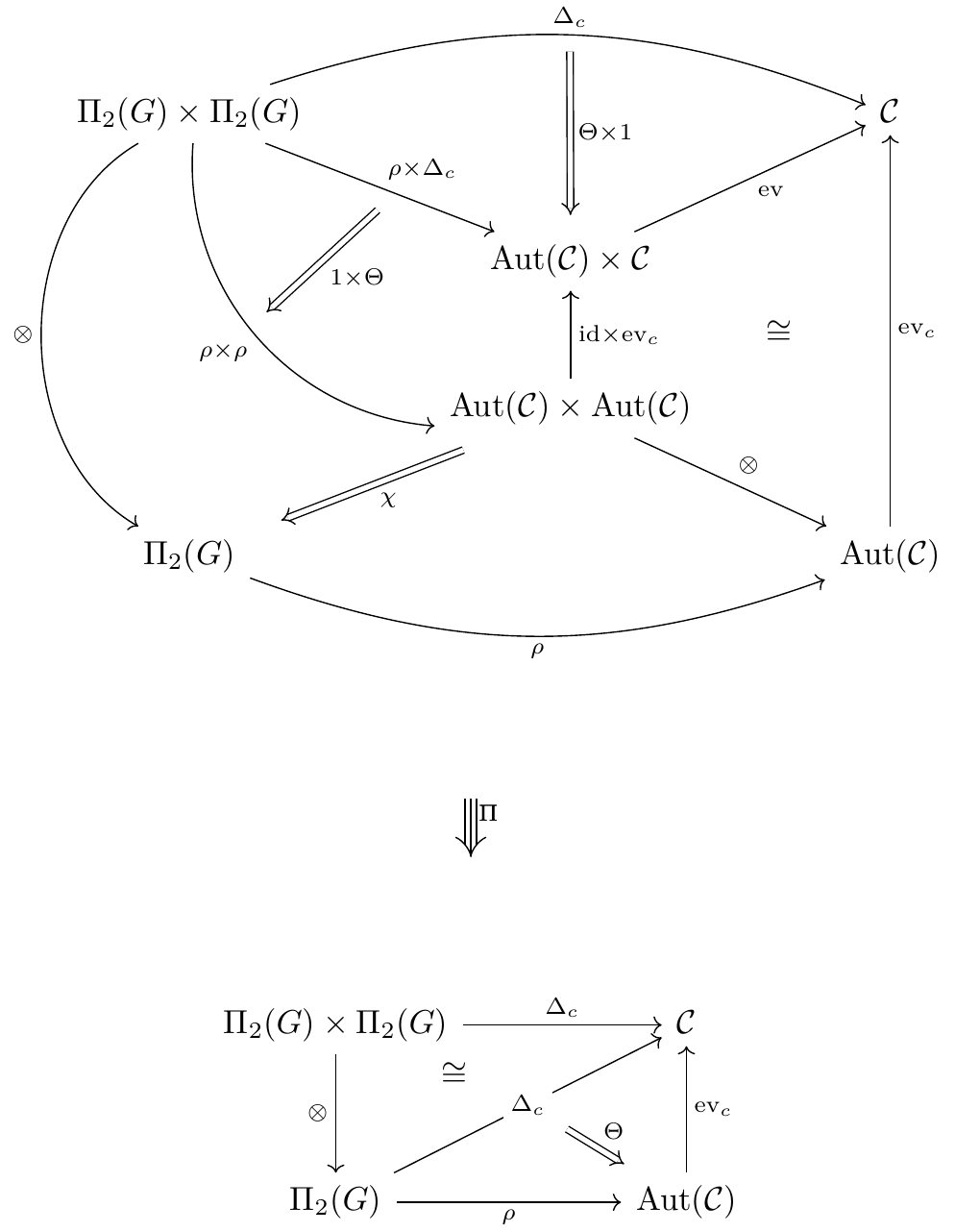}}}
    \end{equation} \\
    which in components means that for every tuple of group
    elements $(g,h)$ we have an invertible 2-morphism $\Pi_{gh}$ in the
    diagram 
    \begin{equation}
      \label{eq:pi}
      \vcenter{\hbox{\includegraphics{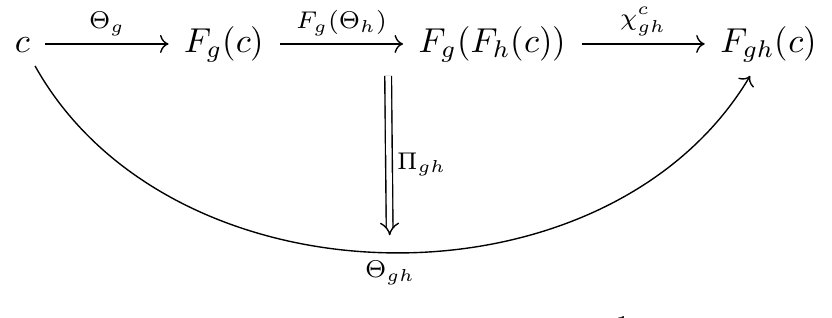}}}
    \end{equation}
  \item for the unital structure, another invertible modification $M$,
    which only has the component given in the diagram 
    \begin{equation}
      \vcenter{\hbox{\includegraphics{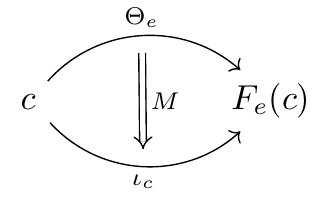}}}
    \end{equation}
  \end{itemize}
  with $\iota$ as in equation \eqref{eq:iota-action}.  The data $(c,
  \Theta, \Pi, M)$ of a homotopy fixed point then has to obey the
  following three conditions. Using the equation in
  \cite[p.21-22]{gps95} we find the condition
  \begin{equation}
    \label{eq:fixed-point-condition1}
    \vcenter{\hbox{\includegraphics[width=\textwidth]{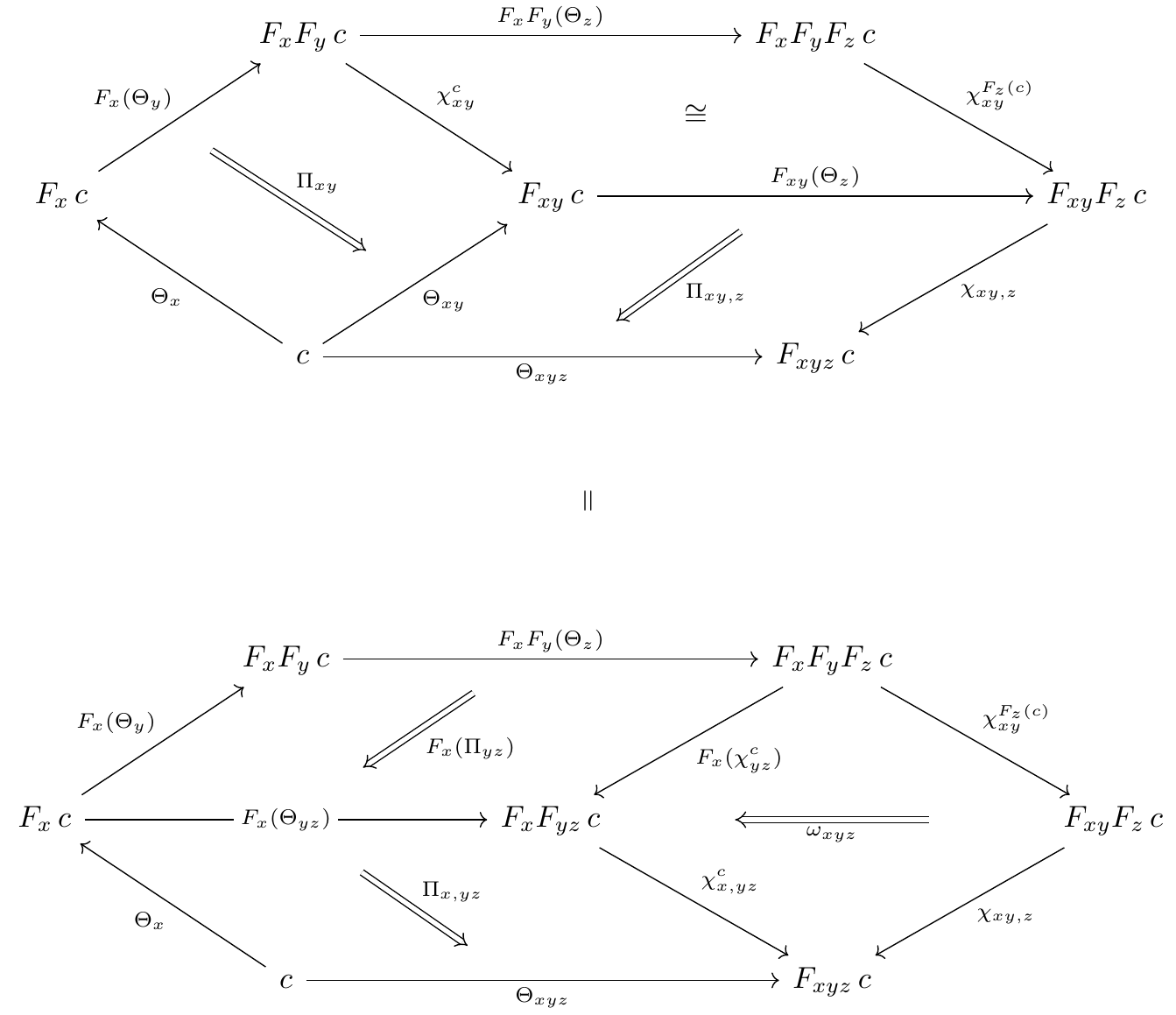}}}
  \end{equation}
  whereas the equation on p.23 of \cite{gps95} demands that we have
  \begin{equation}
    \label{eq:fixed-point-condition2}
    \vcenter{\hbox{\includegraphics{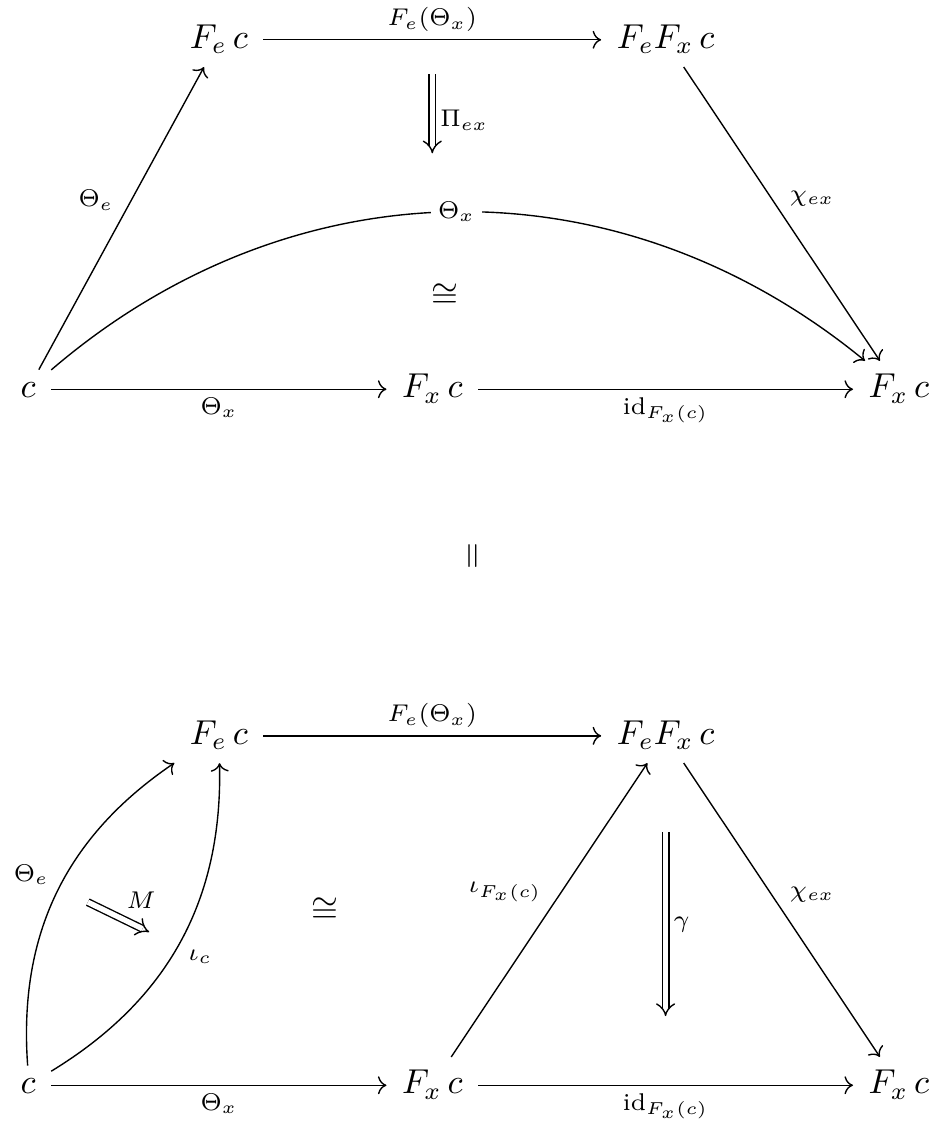}}}
  \end{equation}
  and finally the equation on p.25 of \cite{gps95} demands that
  \begin{equation}
    \label{eq:fixed-point-condition3}
    \vcenter{ \hbox{\includegraphics{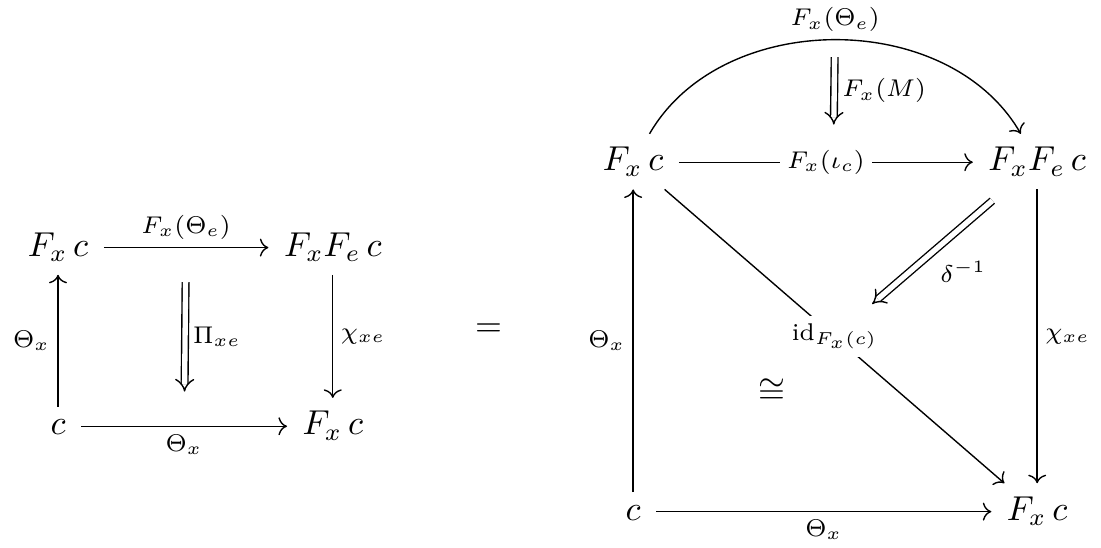}}}
  \end{equation}
\end{remark}

\begin{remark}
  Suppose that $(c, \Theta, \Pi, M)$ and $(c', \Theta', \Pi', M')$ are
  homotopy fixed points. A 1-morphism between these homotopy fixed
  points consists of a trimodification. In detail, this means:
  \begin{itemize}
  \item A 1-morphism $f: c \to c'$,
  \item An invertible modification $m$ in the diagram 
    \begin{equation}
      \label{eq:m-g-modification}
      \vcenter{\hbox{\includegraphics{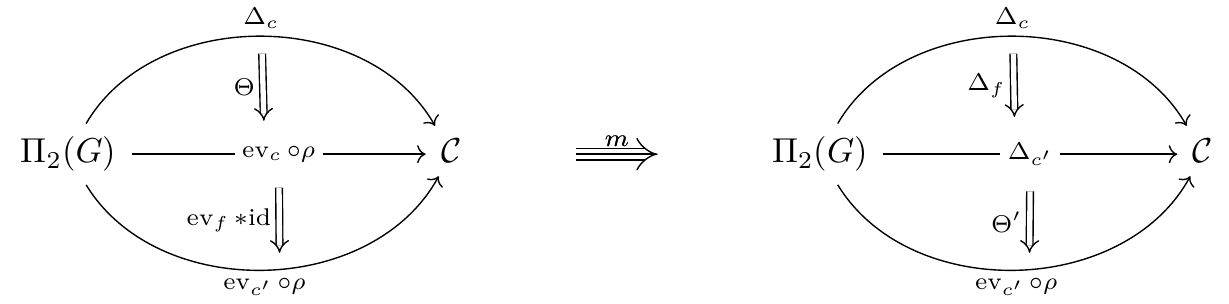}}}
    \end{equation}
    In components, $m_g$ is given by
    \begin{equation}
      \vcenter{\hbox{\includegraphics{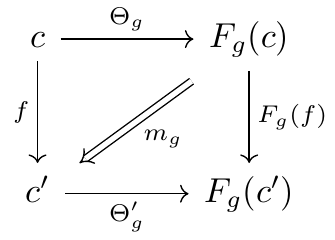}}}
    \end{equation}
    The data $(f,m)$ of a 1-morphism of homotopy fixed points has to
    satisfy the following two equations as on p.25 and p. 26 of
    \cite{gps95}:
  \end{itemize}
  \begin{equation}
    \label{eq:1-morphism-fixed-point-condition1}
    \vcenter{\hbox{\includegraphics{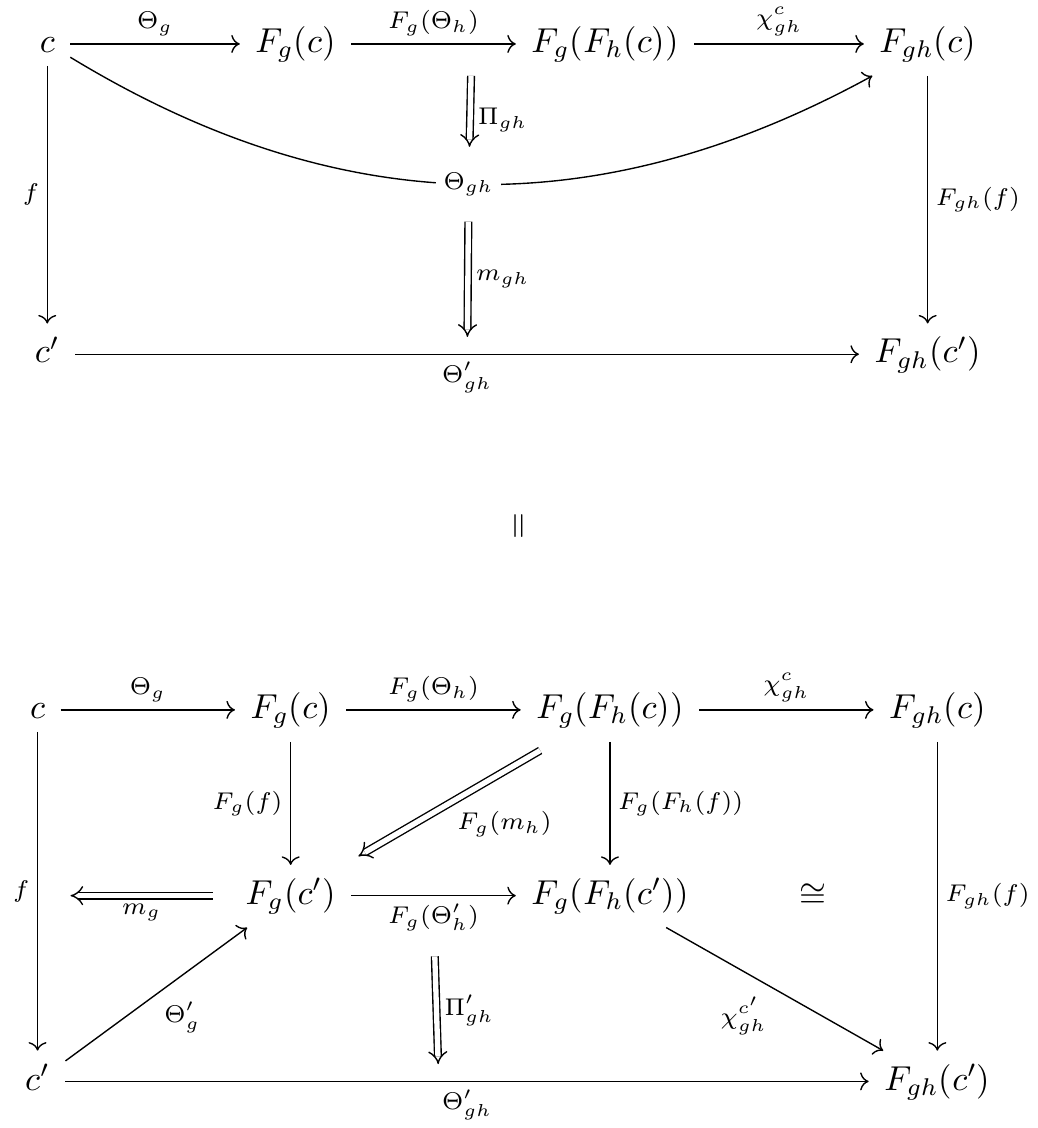}}}
  \end{equation}
  whereas the second equation reads
  \begin{equation}
    \label{eq:1-morphism-fixed-point-condition2}
    \vcenter{\hbox{\includegraphics{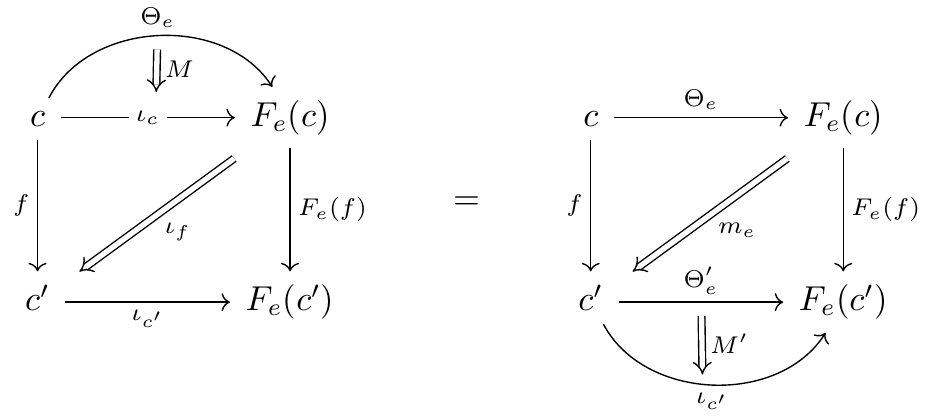}}}
  \end{equation}
\end{remark}

\begin{remark}
  The condition saying that $m$, as introduced in equation
  \eqref{eq:m-g-modification}, is a modification will be vital for the
  proof of Theorem \ref{thm:so2-fixed-point-bicat} and states that for
  every path $\gamma: g \to h$ in $G$, we must have the following
  equality of 2-morphisms in the two diagrams:
  \begin{equation}
    \label{eq:1-morphism-fixed-point-modification-condition}
    \vcenter{\hbox{\includegraphics{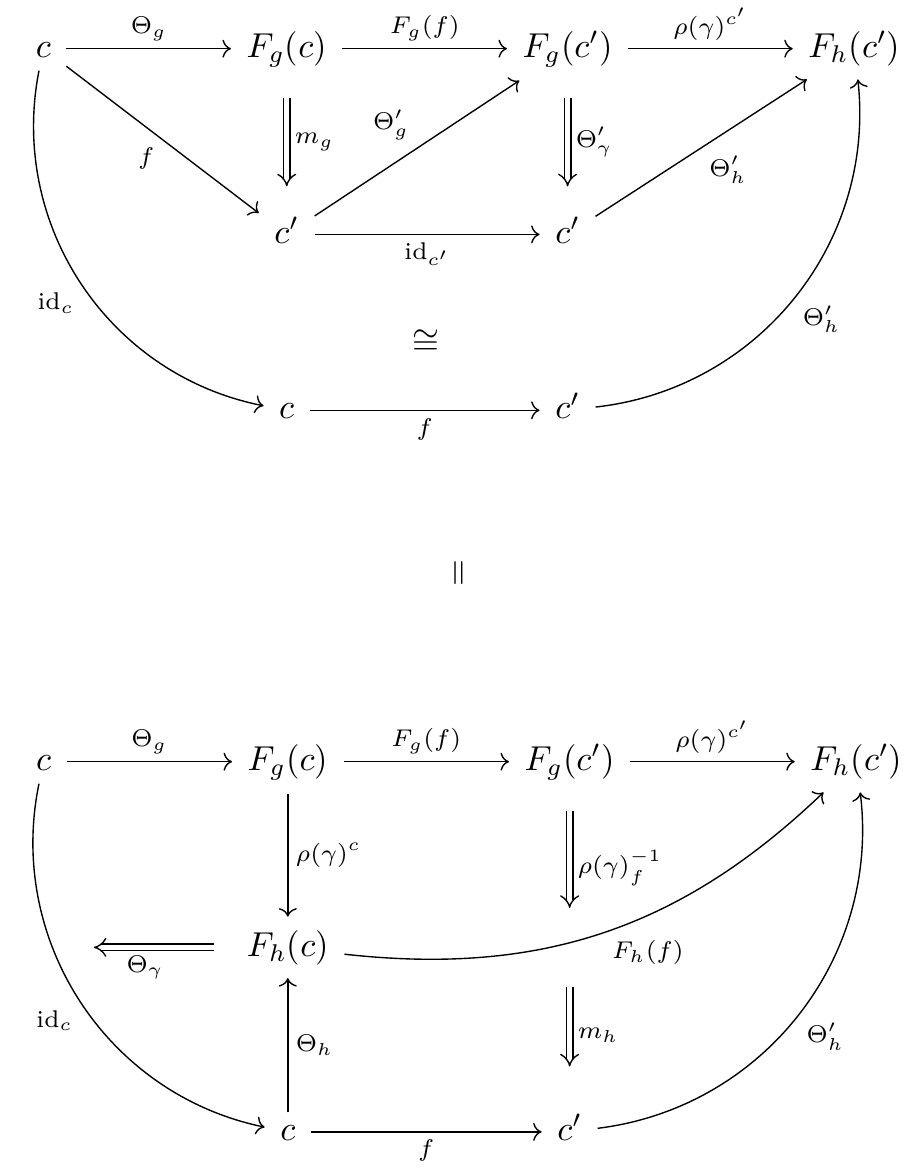}}}
  \end{equation}
\end{remark}

Next, we come to 2-morphisms of the bicategory $\cC^G$ of homotopy fixed points:
\begin{remark}
  Let $(f,m), (\xi,n): (c, \Theta, \Pi, M) \to (c', \Theta', \Pi',
  M')$ be two 1-morphisms of homotopy fixed points. A 2-morphism of
  homotopy fixed points consists of a perturbation between those
  trimodifications. In detail, a 2-morphism of homotopy fixed points
  consists of a 2-morphism $\alpha: f \to \xi$ in $\cC$, so that
  \begin{equation}
    \label{eq:2-morphism-fixed-point-condition}
    \vcenter{\hbox{\includegraphics{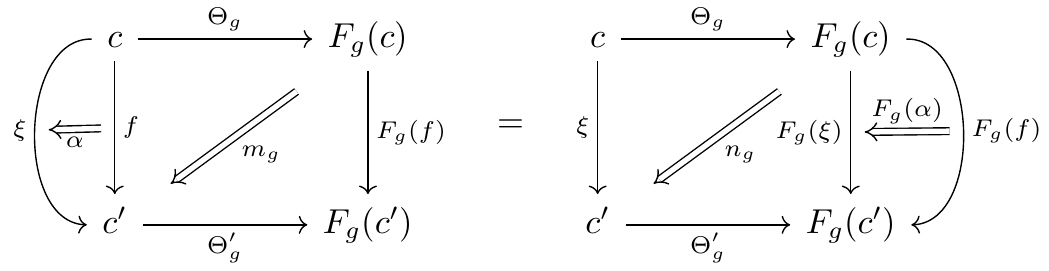}}}
  \end{equation}
\end{remark}
Let us give an example of a group action on bicategories and its
homotopy fixed points:
\begin{ex}
  Let $G$ be a discrete group, and let $\cC$ be any
  bicategory. Suppose $\rho:\Pi_2(G) \to \Aut(\cC)$ is the trivial
  $G$-action. Then, by remark \ref{rem:unpacking-fixed-point} a
  homotopy fixed point, i.e. an object of $\cC^G$ consists of
  \begin{itemize}
  \item an object $c$ of $\cC$,
  \item a 1-equivalence $\Theta_g :c \to c$ for every $g \in G$,
  \item a 2-isomorphism $\Pi_{gh}: \Theta_h \circ \Theta_g \to
    \Theta_{gh}$,
  \item a 2-isomorphism $M:\Theta_e \to \id_c$.
  \end{itemize}
  This is exactly the same data as a functor $\B{\ubar G} \to \cC$,
  where $\B{\ubar G}$ is the bicategory with one object, $G$ as
  morphisms, and only identity 2-morphisms.  Extending this analysis
  to 1- and 2-morphisms of homotopy fixed points shows that we have an
  equivalence of bicategories
  \begin{equation}
    \cC^G \cong \Fun(\B{\ubar G}, \cC).
  \end{equation}
  When one specializes to $\cC=\Vect_2$, the functor bicategory
  $\Fun(\B{\ubar G}, \cC)$ is also known as $\Rep_2(G)$, the
  bicategory of 2-representations of $G$. Thus, we have an equivalence
  of bicategories $\Vect_2^{G} \cong \Rep_2(G)$. This result
  generalizes the 1-categorical statement that the homotopy fixed
  point 1-category of the trivial $G$-action on $\Vect$ is equivalent
  to $\Rep(G)$, cf. \cite[Example 4.15.2]{egno-tensor-book}.
\end{ex}

\section{Homotopy fixed points of the trivial SO(2)-action}
\label{sec:results}
We are now in the position to state and prove the main result of the
present paper. Applying the description of homotopy fixed points in
Remark \ref{rem:unpacking-fixed-point} to the trivial action of the
topological group $SO(2)$ on an arbitrary bicategory yields Theorem
\ref{thm:so2-fixed-point-bicat}. Specifying the bicategory in question
to be the core of the fully-dualizable objects of the
Morita-bicategory $\Alg_2$ then shows in corollary
\ref{cor:frob-fixed-points} that homotopy fixed points of the trivial
$SO(2)$-action on $\core{\Alg_2^\fd}$ are given by symmetric,
semi-simple Frobenius algebras.
\begin{theorem}
  \label{thm:so2-fixed-point-bicat}
  Let $\cC$ be a bicategory, and let $\rho: \Pi_2(SO(2)) \to
  \Aut(\cC)$ be the trivial $SO(2)$-action on $\cC$. Then, the
  bicategory of homotopy fixed points $\cC^{SO(2)}$ is equivalent to
  the bicategory where
  \begin{itemize}
  \item objects are given by pairs $(c, \lambda)$ where $c$ is an
    object of $\cC$, and $\lambda: \id_c \to \id_c$ is a
    2-isomorphism,
  \item 1-morphisms $(c, \lambda) \to (c', \lambda')$ are given by
    1-morphisms $f: c \to c'$ in $\cC$, so that the diagram of
    2-morphisms
    \begin{equation}
      \label{eq:1-morphism-fixed-point-modification-condition-1}
      \vcenter{\hbox{\includegraphics{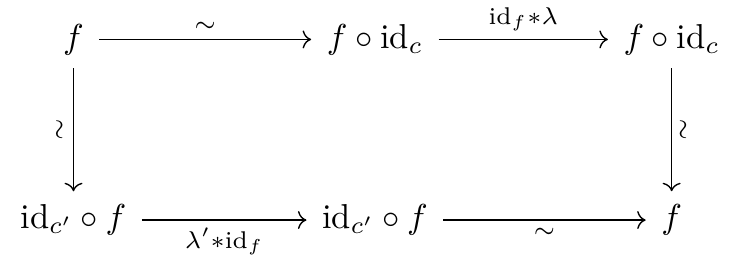}}}
    \end{equation}
    commutes, where $*$ denotes horizontal composition of
    2-morphisms. The unlabeled arrows are induced by the canonical
    coherence isomorphisms of $\cC$.
  \item 2-morphisms of $\cC^G$ are given by 2-morphisms $\alpha: f \to
    f'$ in $\cC$.
  \end{itemize}
\end{theorem}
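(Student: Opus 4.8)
The plan is to compute $\cC^{SO(2)} = \Nat(\Delta, \rho)$ directly by unpacking the data in Remark \ref{rem:unpacking-fixed-point} for the special case where $\rho$ is the trivial action, and then to simplify this data using the two key topological facts about $SO(2)$: namely that $SO(2)$ is connected (so $\pi_0(SO(2))$ is trivial) and that $\pi_1(SO(2)) \cong \Z$. Concretely, since the action is trivial we have $F_g = \id_\cC$ for all $g$, all the structure modifications $\phi, \chi, \omega, \gamma, \delta$ of the action are identities, and $\iota = \id_{\id_\cC}$. An object of $\cC^{SO(2)}$ therefore consists of an object $c \in \cC$, a pseudo-natural equivalence $\Theta$ from $\Delta_c$ to $\ev_c$ whose components are $\Theta_g : c \to c$ together with 2-isomorphisms $\Theta_\gamma$ for each path, an invertible modification $\Pi$ with components $\Pi_{gh} : \Theta_h \circ \Theta_g \To \Theta_{gh}$, and an invertible 2-morphism $M : \Theta_e \To \id_c$, all subject to the three conditions \eqref{eq:fixed-point-condition1}, \eqref{eq:fixed-point-condition2}, \eqref{eq:fixed-point-condition3}.

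The first main step is to show that, up to the equivalence relation built into the bicategory of homotopy fixed points, all of this data can be \emph{rigidified} so that $\Theta_g = \id_c$ for every $g$. The idea is that $SO(2)$ is path-connected, so every $\Theta_g$ is connected to $\Theta_e$ by a chosen path, hence (via $\Theta_\gamma$ and $M$) canonically 2-isomorphic to $\id_c$; one uses these 2-isomorphisms to define an equivalent homotopy fixed point in which $\Theta$ is literally constant at $\id_c$. This is a standard ``transport of structure'' argument, but it needs to be carried out carefully with the coherence data so that the result is genuinely equivalent in $\cC^{SO(2)}$ — I expect this to be somewhat delicate but essentially formal. Once $\Theta$ is trivialized, naturality of $\Theta_\gamma$ with respect to 2-tracks, together with $\pi_1(SO(2)) \cong \Z$, forces the remaining freedom: the only surviving datum is a single 2-isomorphism $\id_c \to \id_c$, namely the image $\lambda := \Theta_{\gamma_0}$ of the generating loop $\gamma_0$ of $\pi_1(SO(2))$ (or equivalently an appropriate combination of $\Pi$ and $M$ read off along that loop). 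The conditions \eqref{eq:fixed-point-condition1}--\eqref{eq:fixed-point-condition3} should then reduce to statements that are automatically satisfied, so that objects of $\cC^{SO(2)}$ are classified exactly by pairs $(c, \lambda)$.

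The second main step repeats this analysis one categorical level down, for 1-morphisms and 2-morphisms. A 1-morphism of homotopy fixed points is a trimodification $(f, m)$; after trivializing $\Theta$ and $\Theta'$ the modification $m$ has components $m_g$ which, by path-connectedness of $SO(2)$, are again determined up to coherence, and the genuine content is the single modification condition \eqref{eq:1-morphism-fixed-point-modification-condition} evaluated on the generating loop $\gamma_0$ — this is precisely what produces the commuting square \eqref{eq:1-morphism-fixed-point-modification-condition-1} relating $\lambda$, $\lambda'$ and horizontal composition with $f$. For 2-morphisms, the perturbation condition \eqref{eq:2-morphism-fixed-point-condition} should collapse to no condition at all once $\Theta$ is trivial, leaving just a 2-morphism $\alpha : f \to f'$ in $\cC$. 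Assembling these three computations and checking that the identifications are compatible with composition gives the asserted equivalence of bicategories; the honest construction of the equivalence functor (rather than just a bijection on equivalence classes) will require packaging the transport-of-structure data coherently, and I expect \emph{that} bookkeeping — not any individual diagram chase — to be the real obstacle.
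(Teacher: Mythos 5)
Your proposal is correct and follows essentially the same route as the paper: unpack the tritransformation data, use that $SO(2)$ is connected with $\pi_1\cong\Z$ and $\pi_2=0$ to collapse everything to a single 2-isomorphism $\lambda:\id_c\to\id_c$ read off the generating loop, check that the coherence conditions either determine $\Pi$ and $m$ in terms of $M$, $M'$ or hold automatically, and conclude that the forgetful functor is an equivalence. The only organizational difference is that the paper performs your ``rigidification'' step at the source, replacing $\Pi_2(SO(2))$ by the equivalent one-object bicategory $\B{\ubar\Z}$ before unpacking, rather than transporting structure on the fixed-point data itself.
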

\begin{proof}
  First, notice that we do not require any conditions on the
  2-morphisms of $\cC^{SO(2)}$. This is due to the fact that the
  action is trivial, and that $\pi_2(SO(2))=0$. Hence, all naturality
  conditions with respect to 2-morphisms in $\Pi_2(SO(2))$ are
  automatically fulfilled.

  To start, we observe that the fundamental 2-groupoid $\Pi_2(SO(2))$
  is equivalent to the bicategory consisting of only one object, $\Z$
  worth of morphisms, and only identity 2-morphisms which we denote by
  $\B{\ubar \Z}$. Thus, it suffices to consider the homotopy fixed
  point bicategory of the trivial action $\B{\ubar \Z} \to
  \Aut(\cC)$. In this case, the definition of a homotopy fixed point
  as in \ref{def:fixed-point} reduces to
  \begin{itemize}
  \item An object $c$ of $\cC$,
  \item A 1-equivalence $\Theta:= \Theta_* : c \to c$,
  \item For every $n \in \Z$, an invertible 2-morphism $\Theta_n :
    \id_c \circ \Theta \to \Theta \circ \id_c$. Since $\Theta$ is a
    pseudo-natural transformation, it is compatible with respect to
    composition of 1-morphisms in $\B \ubar{\Z}$. Therefore,
    $\Theta_{n+m}$ is fully determined by $\Theta_n$ and $\Theta_m$,
    cf. \cite[Figure A.1]{schommerpries-classification} for the
    relevant commuting diagram. Thus, it suffices to specify
    $\Theta_1$.

    By using the canonical coherence isomorphisms of $\cC$, we see
    that instead of giving $\Theta_1$, we can equivalently specify an
    invertible 2-morphism
    \begin{equation}
      \tilde \lambda: \Theta \to \Theta.
    \end{equation}
    which will be used below.
  \item A 2-isomorphism
    \begin{equation}
      \id_c \circ \Theta \circ \Theta \to \Theta
    \end{equation}
    which is equivalent to giving a 2-isomorphism
    \begin{equation}
      \Pi: \Theta \circ \Theta \to \Theta .
    \end{equation}
  \item A 2-isomorphism
    \begin{equation}
\label{eq:m}
      M: \Theta \to {\id_c}.
    \end{equation}
  \end{itemize}
  Note that equivalently to the 2-isomorphism $\tilde \lambda$, one
  can specify an invertible 2-iso\-morphism
  \begin{equation}
    \lambda : \id_c \to \id_c
  \end{equation}
  where
  \begin{equation}
    \label{eq:lambda}
    \lambda:= M \circ  \tilde \lambda \circ M^{-1}.
  \end{equation}
with $M$ as in equation \eqref{eq:m}.
  This data has to satisfy the following three equations: Equation
  \eqref{eq:fixed-point-condition1} says that we must have
  \begin{equation}
    \label{eq:fixed-point-condition1-1}
    \Pi \circ (\id_\Theta * \Pi)  = \Pi \circ (\Pi * \id_\Theta)
  \end{equation}
  whereas equation \eqref{eq:fixed-point-condition2} demands that
  $\Pi$ equals the composition
  \begin{equation}
    \label{eq:fixed-point-condition2-1}
    \Theta \circ \Theta \xrightarrow{ \id_\Theta * M} \Theta \circ \id_c \cong \Theta
  \end{equation} 
  and finally equation \eqref{eq:fixed-point-condition3} tells us that
  $\Pi$ must also be equal to the composition
  \begin{equation}
    \label{eq:fixed-point-condition3-1}
    \Theta \circ \Theta \xrightarrow{ M *  \id_\Theta }\id_c \circ \Theta \cong \Theta.
  \end{equation} 
  Hence $\Pi$ is fully specified by $M$. An explicit calculation using
  the two equations above then confirms that equation
  \eqref{eq:fixed-point-condition1-1} is automatically
  fulfilled. Indeed, by composing with $\Pi^{-1}$ from the right, it
  suffices to show that $\id_\Theta * \Pi=\Pi* \id_\Theta$. Suppose
  for simplicity that $\cC$ is a strict 2-category. Then,
  \begin{equation}
    \begin{aligned}
      \id_\Theta * \Pi & = \id_\Theta *(M* \id_\Theta) && \qquad \text{by equation \eqref{eq:fixed-point-condition3-1}} \\
      &= (\id_\Theta*M)*\id_{\Theta} && \\
      &=\Pi* \id_\Theta && \qquad \text{by equation
        \eqref{eq:fixed-point-condition2-1} }
    \end{aligned}
  \end{equation}
  Adding appropriate associators shows that this is true in a general
  bicategory.

  If $(c,\Theta, \lambda, \Pi,M)$ and $(c',\Theta', \lambda',\Pi',M')$
  are two homotopy fixed points, the definition of a 1-morphism of
  homotopy fixed points reduces to
  \begin{itemize}
  \item A 1-morphism $f: c \to c'$ in $\cC$,
  \item A 2-isomorphism $ m: f \circ \Theta \to \Theta' \circ f$ in
    $\cC$
  \end{itemize}
  satisfying two equations.  The condition due to equation
  \eqref{eq:1-morphism-fixed-point-condition2} demands that the
  following isomorphism
  \begin{align}
    f \circ \Theta \xrightarrow{\id_f * M} f \circ \id_c \cong f
    \intertext{is equal to the isomorphism} f \circ \Theta
    \xrightarrow{m} \Theta' \circ f \xrightarrow{M' * \id_f} \id_{c'}
    \circ f \cong f
  \end{align}
  and thus is equivalent to the equation
  \begin{equation}
    \label{eq:1-morphism-fixed-point-condition2-1}
    m=\left( f \circ \Theta \xrightarrow{\id_f * M} f \circ \id_c \cong f \cong \id_{c'} \circ f \xrightarrow{{M'}^{-1}* \id_{f}} \Theta' \circ f \right) .
  \end{equation}
  Thus, $m$ is fully determined by $M$ and $M'$.  The condition due to
  equation \eqref{eq:1-morphism-fixed-point-condition1} reads
  \begin{equation}
    \label{eq:1-morphism-fixed-point-condition1-1}
    m \circ (\id_f * \Pi) = (\Pi' * \id_f) \circ (\id_{\Theta'} * m) \circ (m * \id_\Theta)
  \end{equation}
  and is automatically satisfied, as an explicit calculation
  confirms. Indeed, if $\cC$ is a strict 2-category we have that
  \begin{align*}
    & (\Pi' * \id_f) \circ (\id_{\Theta'} * m) \circ (m * \id_\Theta)  \\
    & \qquad = (\Pi' * \id_f) \circ \left[ \id_{\Theta'} *({M'}^{-1} * \id_f \circ \id_f* M) \right] \circ \left[ ({M'}^{-1} * \id_f \circ \id_f *M) * \id_\Theta \right] \\
    & \qquad
    \begin{aligned}
      =(\Pi' * \id_f) &\circ (\id_{\Theta'} * {M'}^{-1} * \id_f) \circ (\id_{\Theta'} * \id_f * M) \\
      & \circ ({M'}^{-1} * \id_f * \id_\Theta) \circ (\id_f*M*
      \id_\Theta)
    \end{aligned}
    \\
    & \qquad = (\Pi' *\id_f) \circ ({\Pi'}^{-1}*\id_f) \circ (\id_{\Theta'} * \id_f *M) \circ ({M'}^{-1}* \id_f * \id_\Theta)  \circ (\id_f * \Pi)  \\
    &\qquad= (\id_{\Theta'} * \id_f *M) \circ ({M'}^{-1}* \id_f * \id_\Theta) \circ (\id_f * \Pi) \\
    &\qquad =(M^{-1}* \id_f) \circ (\id_f *M) \circ (\id_f * \Pi) \\
    &\qquad =m \circ (\id_f * \Pi)
  \end{align*}
  as desired. Here, we have used equation
  \eqref{eq:1-morphism-fixed-point-condition2-1} in the first and last
  line, and equations \eqref{eq:fixed-point-condition2-1} and
  \eqref{eq:fixed-point-condition3-1} in the third line.  Adding
  associators shows this for an arbitrary bicategory.

  The condition that $m$ is a modification as spelled out in equation
  \eqref{eq:1-morphism-fixed-point-modification-condition} demands
  that
  \begin{equation}
    (\tilde \lambda' * \id_f) \circ m=m \circ (\id_f *  \tilde \lambda)
  \end{equation}
  as equality of 2-morphisms between the two 1-morphisms
  \begin{equation}
    f \circ \Theta \to \Theta' \circ f.
  \end{equation}
  Using equation \eqref{eq:1-morphism-fixed-point-condition2-1} and
  replacing $\tilde \lambda$ by $\lambda$ as in equation
  \eqref{eq:lambda}, we see that this requirement is equivalent to the
  commutativity of diagram
  \eqref{eq:1-morphism-fixed-point-modification-condition-1}.
 
  If $(f,m)$ and $(g,n)$ are two 1-morphisms of homotopy fixed points,
  a 2-morphism of homotopy fixed points consists of a 2-morphisms
  $\alpha: f \to g$. The condition coming from equation
  \eqref{eq:2-morphism-fixed-point-condition} then demands that the
  diagram
  \begin{equation}
    \label{eq:2-morphism-fixed-point-condition-1}
    \begin{tikzcd}[column sep=large, row sep=large]
      f \circ \Theta \rar{m} \dar[swap]{\alpha * \id_{\Theta}} &  \Theta' \circ f \dar{\id_{\Theta' } * \alpha} \\
      g \circ \Theta \rar[swap]{n} & \Theta' \circ g
    \end{tikzcd}
  \end{equation}
  commutes. Using the fact that both $m$ and $n$ are uniquely
  specified by $M$ and $M'$, one quickly confirms that the diagram
  commutes automatically.

  Our analysis shows that the forgetful functor $U$ which forgets the
  data $M$, $\Theta$ and $\Pi$ on objects, which forgets the data $m$
  on 1-morphisms, and which is the identity on 2-morphisms is an
  equivalence of bicategories. Indeed, let $(c,\lambda)$ be an object
  in the strictified homotopy fixed point bicategory. Choose
  $\Theta:=\id_c$, $M:=\id_\Theta$ and $\Pi$ as in equation
  \eqref{eq:fixed-point-condition2-1}. Then, $U(c,\Theta, M , \Pi,
  \lambda)=(c,\lambda)$. This shows that the forgetful functor is
  essentially surjective on objects.  Since $m$ is fully determined by
  $M$ and $M'$, it is clear that the forgetful functor is essentially
  surjective on 1-morphisms.  Since
  \eqref{eq:2-morphism-fixed-point-condition-1} commutes
  automatically, the forgetful functor is bijective on 2-morphisms and
  thus an equivalence of bicategories.
\end{proof}
In the following, we specialise Theorem
\ref{thm:so2-fixed-point-bicat} to the case of symmetric Frobenius
algebras and Calabi-Yau categories.
\subsection{Symmetric Frobenius algebras as homotopy fixed points}
In order to state the next corollary, recall that the fully-dualizable
objects of the Morita bicategory $\Alg_2$ consisting of algebras,
bimodules and intertwiners are precisely given by the
finite-dimensional, semi-simple algebras
\cite{schommerpries-classification}. Furthermore, recall that the core
$ \core{\cC}$ of a bicategory $\cC$ consists of all objects of $\cC$,
the 1-morphisms are given by 1-equivalences of $\cC$, and the 2-morphisms are
restricted to be isomorphisms.
\begin{cor}
  \label{cor:frob-fixed-points}
  Suppose $\cC=\core{\Alg_2^{\fd}}$, and consider the trivial
  $SO(2)$-action on $\cC$. Then $\cC^{SO(2)}$ is equivalent to the
  bicategory of finite-dimensional, semi-simple symmetric Frobenius
  algebras $\Frob$, as defined in definition
  \ref{def:frob-bicategory}. This implies a bijection of
  isomorphism-classes of symmetric, semi-simple Frobenius algebras and
  homotopy fixed points of the trivial $SO(2)$-action on
  $\core{\Alg_2^\fd}$.
\end{cor}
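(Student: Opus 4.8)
The strategy is to combine Theorem \ref{thm:so2-fixed-point-bicat} with the explicit description of semisimple symmetric Frobenius algebras and compatible Morita contexts established in Section \ref{sec:frob-alg-morita-context}. By Theorem \ref{thm:so2-fixed-point-bicat} applied to $\cC = \core{\Alg_2^\fd}$, the homotopy fixed point bicategory $\cC^{SO(2)}$ is equivalent to the bicategory whose objects are pairs $(A, \lambda)$ with $A$ a finite-dimensional semisimple algebra and $\lambda : \id_A \to \id_A$ an invertible 2-morphism in $\core{\Alg_2^\fd}$; whose 1-morphisms $(A,\lambda) \to (B,\lambda')$ are 1-equivalences $f : A \to B$ (i.e.\ Morita contexts) making diagram \eqref{eq:1-morphism-fixed-point-modification-condition-1} commute; and whose 2-morphisms are just 2-isomorphisms of Morita contexts. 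So the proof amounts to identifying this "decorated" bicategory with $\Frob$.

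**Key steps.** First I would identify the 2-isomorphisms $\lambda : \id_A \to \id_A$ in $\Alg_2$. The identity 1-morphism on $A$ is the bimodule ${}_AA_A$, and $\End({}_AA_A) \cong Z(A)$ as algebras; hence an invertible 2-morphism $\lambda : \id_A \to \id_A$ is exactly a choice of invertible central element of $A$. Since $A$ is semisimple, $Z(A) \cong \K^r$, so such a $\lambda$ is the same datum as a tuple $(\lambda_1, \dots, \lambda_r)$ of nonzero scalars. By Lemma \ref{lem:symmetric-frobenius-form-unique-up-to-multiple} and equation \eqref{eq:frob-form-ssi-alg}, this is precisely the data of a symmetric Frobenius form on $A$ (up to the harmless bookkeeping of whether one records the trace-normalization explicitly): an object $(A,\lambda)$ of $\cC^{SO(2)}$ is the same as a finite-dimensional semisimple symmetric Frobenius algebra. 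Second, I would check that the 1-morphism condition matches compatibility: diagram \eqref{eq:1-morphism-fixed-point-modification-condition-1} says that for a Morita context $f = (M,N,\eps,\eta) : A \to B$, the central element $\lambda$ acts on $M$ on one side the way $\lambda'$ acts on the other, after conjugating by the coherence isomorphisms $f \circ \id_A \cong f \cong \id_B \circ f$. Translating "$\lambda$ acts via the right $A$-action, $\lambda'$ via the left $B$-action, and these agree on $M$" is exactly condition (2) of Proposition \ref{prop:morita-context-compatible-equivalence}, namely $m.a = b.m$ for all $m \in {}_BM_A$ (and the analogous statement for $N$); by that proposition this is equivalent to compatibility of the Morita context with the Frobenius forms in the sense of Definition \ref{def:morita-compatible}. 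Third, on 2-morphisms both bicategories have the same morphisms — isomorphisms of Morita contexts — with no extra conditions, so the identification is immediate there. Assembling these, the forgetful/identity-on-underlying-data assignment gives a biequivalence $\cC^{SO(2)} \cong \Frob$. The final sentence about isomorphism classes then follows by passing to equivalence classes of objects.

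**Main obstacle.** The routine but slightly delicate point is step two: carefully unwinding diagram \eqref{eq:1-morphism-fixed-point-modification-condition-1} — which involves the coherence isomorphisms of $\Alg_2$ relating $f$, $f \circ \id_A$ and $\id_B \circ f$, i.e.\ the unitor bimodule maps $M \ot_A A \to M$ and $B \ot_B M \to M$ — and matching the resulting equality of 2-morphisms with the bimodule condition $m.a = b.m$. One must track that horizontal composition $\lambda * \id_f$ on the bimodule side is given by acting with the central element $a \in Z(A)$ via the $A$-action on $M$, and similarly $\id_f * \lambda'$ via $b \in Z(B)$; once this dictionary is in place, Proposition \ref{prop:morita-context-compatible-equivalence} does the rest. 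A minor additional check is that the bijection $Z(A)^{\times} \leftrightarrow \{\text{symmetric Frobenius forms on } A\}$ is compatible with the chosen identification $Z(A) \cong \K^r$ used in defining $a$ and $b$ in Proposition \ref{prop:morita-context-compatible-equivalence}, so that "$\lambda_i^A = \lambda_i^B$ for all $i$" (condition (3)) lines up with Definition \ref{def:morita-compatible}; but this is exactly the content of the equivalence $(1) \Leftrightarrow (3)$ already proved there, so no new work is needed.
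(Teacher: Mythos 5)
Your proposal is correct and follows essentially the same route as the paper: apply Theorem \ref{thm:so2-fixed-point-bicat}, identify $\lambda$ with an invertible central element and hence (via Lemma \ref{lem:symmetric-frobenius-form-unique-up-to-multiple}) with a symmetric Frobenius form, and match the 1-morphism condition with compatibility through condition (2) of Proposition \ref{prop:morita-context-compatible-equivalence}. The only imprecision is that in $\core{\Alg_2^\fd}$ the identity 1-morphism is the identity \emph{Morita context} rather than the bare bimodule ${}_AA_A$, so a 2-isomorphism $\id_A \to \id_A$ is a priori a pair of bimodule maps $(\lambda_1,\lambda_2)$; the paper observes that the diagrams of Definition \ref{def:morphism-of-morita-context} force $\lambda_2=\lambda_1^{-1}$, after which your description applies verbatim.
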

\begin{proof}
  Indeed, by Theorem \ref{thm:so2-fixed-point-bicat}, an object of
  $\cC^{SO(2)}$ is given by a finite-dimensional semisimple algebra
  $A$, together with an isomorphism of Morita contexts $\id_A \to
  \id_A$. By definition, a morphism of Morita contexts consists of two
  intertwiners of $(A,A)$-bimodules $\lambda_1, \lambda_2: A \to
  A$. The diagrams in definition \ref{def:morphism-of-morita-context}
  then require that $\lambda_1= \lambda_2^{-1}$. Thus, $\lambda_2$ is
  fully determined by $\lambda_1$. Let $\lambda:= \lambda_1$. Since
  $\lambda$ is an automorphism of $(A,A)$-bimodules, it is fully
  determined by $\lambda(1_A) \in Z(A)$. This gives $A$, by Lemma
  \ref{lem:symmetric-frobenius-form-unique-up-to-multiple}, the
  structure of a symmetric Frobenius algebra.

  We analyze the 1-morphisms of $\cC^{SO(2)}$ in a similar way: if
  $(A,\lambda)$ and $(A',\lambda')$ are finite-dimensional semi-simple
  symmetric Frobenius algebras, a 1-morphism in $\cC^{SO(2)}$ consists
  of a Morita context $\cM :A \to A'$ so that
  \eqref{eq:1-morphism-fixed-point-modification-condition-1} commutes.

  Suppose that $\cM=({_{A'}M_A}, {_{A}N_{A'}}, \eps, \eta)$ is a
  Morita context, and let $a:=\lambda(1_A)$ and
  $a':={\lambda'}(1_{A'})$.
  Then, the condition that
  \eqref{eq:1-morphism-fixed-point-modification-condition-1} commutes
  demands that
  \begin{equation}
    \begin{aligned}
      m.a & =a'.m \\
      a^{-1}.n &=n.{ a'}^{-1}
    \end{aligned}
  \end{equation}
  for every $m \in M$ and every $n \in N$.  By proposition
  \ref{prop:morita-context-compatible-equivalence} this condition is
  equivalent to the fact that the Morita context is compatible with
  the Frobenius forms as in definition \ref{def:morita-compatible}.

  It follows that the 2-morphisms of $\cC^{SO(2)}$ and $\Frob$ are
  equal to each other, proving the result.
\end{proof}
\begin{remark}
\label{rem:davidovich}
In \cite[Proposition 3.3.2]{davi11}, the bigroupoid $\Frob$ of
corollary \ref{cor:frob-fixed-points} is shown to be equivalent to the
bicategory of 2-functors $\Fun(B^2\Z ,\core{\Alg_2^\fd})$. Assuming a
homotopy hypothesis for bigroupoids, as well as an equivariant homotopy
hypothesis in a bicategorical framework, this bicategory of functors
should agree with the bicategory of homotopy fixed points of the
trivial $SO(2)$-action on $\core{\Alg_2^\fd}$ in corollary
\ref{cor:frob-fixed-points}. Concretely, one might envision the
following strategy for an alternative proof of corollary
\ref{cor:frob-fixed-points}, which should roughly go as follows:
\begin{enumerate}
\item By \cite[Proposition 3.3.2]{davi11}, there is an equivalence of
  bigroupoids $\Frob \cong \Fun(B^2 \Z, \core{\Alg_2^\fd}).$
\item Then, use the homotopy hypothesis for bigroupoids. By this, we
  mean that the fundamental 2-groupoid should induce an equivalence of
  tricategories
  \begin{equation}
    \label{eq:homohyp}
    \Pi_2 :\Top_{\leq 2} \to \BiGrp.
  \end{equation}
  Here, the right hand-side is the tricategory of bigroupoids, whereas
  the left hand side is a suitable tricategory of 2-types. Such an
  equivalence of tricategories induces an equivalence of bicategories
  \begin{equation}
    \Fun(B^2 \Z, \core{\Alg_2^\fd}) \cong \Pi_2(\Hom(BSO(2), X)),
  \end{equation}
  where $X$ is a 2-type representing the bigroupoid
  $\core{\Alg_2^\fd}$.
\item Now, consider the trivial homotopy $SO(2)$-action on the 2-type
  $X$. Using the fact that we work with the trivial $SO(2)$-action, we
  obtain a homotopy equivalence $ \Hom(BSO(2), X) \cong
  X^{hSO(2)}$, cf.  \cite[Page 50]{davi11}.
\item In order to identify the 2-type $X^{hSO(2)}$ with our definition
  of homotopy fixed points, we additionally need an equivariant homotopy
  hypothesis: namely, we need to use that a homotopy action of a
  topological group $G$ on a 2-type $Y$ is equivalent to a $G$-action
  on the bicategory $\Pi_2(Y) $ as in definition
  \ref{def:action-on-bicat} of the present paper. Furthermore, we also
  need to assume that the fundamental 2-groupoid is $G$-equivariant,
  namely that there is an equivalence of bicategories $\Pi_2(Y^{hG})
  \cong \Pi_2(Y)^G$.  Using this equivariant homotopy hypothesis for
  the trivial $SO(2)$-action on the 2-type $X$ then should give an
  equivalence of bicategories
  \begin{equation}
\label{eq:equivariant-homohyp}
    \Pi_2(X^{hSO(2)}) \cong \Pi_2(X)^{SO(2)} \cong (\core{\Alg_2^\fd})^{SO(2)}.
  \end{equation}
\end{enumerate}
Combining all four steps gives an equivalence of bicategories
between the bigroupoid of Frobenius algebras and homotopy fixed points:
 \begin{equation*}
  \Frob \underset{(1)}{\cong} \Fun(B^2 \Z,  \core{\Alg_2^\fd})
  \underset{(2)}{\cong} \Pi_2(\Hom(BSO(2), X)) \underset{(3)}{\cong}
  \Pi_2(X^{hSO(2)})  \underset{(4)}{ \cong} (\core{\Alg_2^\fd})^{SO(2)}.
  \end{equation*}
  In order to turn this argument into a full proof, we would need to provide
  a proof of the homotopy hypothesis for bigroupoids in equation
  \eqref{eq:homohyp}, as well as a proof for the equivariant homotopy
  hypothesis in equation \eqref{eq:equivariant-homohyp}. While the
  homotopy hypothesis as formulated in equation \eqref{eq:homohyp} is
  widely believed to be true, we are not aware of a proof of this
  statement in the literature.  A step in this direction is
  \cite{MoSv93}, which proves that the homotopy categories of 2-types
  and 2-groupoids are equivalent. We however really need the full
  tricategorical version of this statement as in equation
  \eqref{eq:homohyp}, since we need to identify the (higher) morphisms
  in $\BiGrp$ with (higher) homotopies. Notice that statements of this
  type are rather subtle, see \cite{KV91, Si98}.

  While certainly interesting and conceptually illuminating, a proof
  of the equivariant homotopy hypothesis in a bicategorical language
  in equation \eqref{eq:equivariant-homohyp} is beyond the scope of
  the present paper, which aims to give an \emph{algebraic} description
  of homotopy fixed points on bicategories. Although an equivariant
  homotopy hypothesis for $\infty$-groupoids follows from
  \cite[Theorem 4.2.4.1]{Lu09}, we are not aware of a proof of the
  bicategorical statement in equation \eqref{eq:equivariant-homohyp}.
\end{remark}
Next, we compute homotopy fixed points of the trivial $SO(2)$-action
on $\Vect_2^\fd$ and show that they are given by Calabi-Yau
categories. This result is new and has not yet appeared in the
literature.
\subsection{Calabi-Yau categories as homotopy fixed points}
We now apply Theorem \ref{thm:so2-fixed-point-bicat} to Calabi-Yau
categories, as considered in \cite{morre-segal}. Let $\Vect_2$ be the
bicategory consisting of linear, abelian categories, linear functors,
and natural transformations. 

Recall that a $\K$-linear, abelian category $\cC$ is called finite, if
is has finite-dimensional $\Hom$-spaces, every object has got finite
length, the category $\cC$ has got enough projectives, and there are
only finitely many isomorphism classes of simple objects.

The fully-dualizable objects of $\Vect_2$
are then precisely the finite, semi-simple linear categories,
cf. \cite[Appendix A]{bdsv15-2}.
For convenience, we recall the definition of a finite Calabi-Yau category.

\begin{newdef}
  \label{def:calabi-yau-cat}
  Let $\K$ be an algebraically closed field.  A Calabi-Yau category
  $(\cC, \tr^\cC)$ is a $\K$-linear, finite, semi-simple category
  $\cC$, together with a family of $\K$-linear maps
  \begin{equation}
    \tr_c^\cC: \End_\cC(c) \to  \K
  \end{equation}
  for each object $c$ of $\cC$, so that:
  \begin{enumerate}
  \item for each $f \in \Hom_\cC(c,d)$ and for each $g \in
    \Hom_\cC(d,c)$, we have that
    \begin{equation}
      \tr^\cC_c(g \circ f) = \tr^\cC_d (f \circ g), 
    \end{equation}
  \item for each $f \in \End_\cC(x)$ and each $g \in \End_\cC(d)$, we
    have that
    \begin{equation}
         \tr_{c \oplus d }^\cC (f \oplus g) = \tr_c^\cC(f) + \tr_d^\cC(g),
       \end{equation}
\item for all objects $c$ of $\cC$, the induced pairing
  \begin{align}
    \begin{aligned}
      \langle - \, , - \rangle_\cC: \Hom_\cC(c,d) \ot_\K \Hom_\cC(d,c) &\to \K      \\
      f \ot g& \mapsto \tr_c^\cC(g \circ f)
    \end{aligned}
  \end{align}
is a non-degenerate pairing of $\K$-vector spaces.
 \end{enumerate}
 We will call the collection of morphisms $\tr_c^\cC$ a \emph{trace}
 on $\cC$.

 An equivalent way of defining a Calabi-Yau structure on a linear
 category $\cC$ is by specifying a natural isomorphism
  \begin{equation}
    \Hom_\cC(c,d) \to \Hom_\cC(d,c)^*,
  \end{equation}
cf. \cite[Proposition 4.1]{schau12}. 
\end{newdef}
\begin{newdef}
\label{def:calabi-yau-functor}
Let $(\cC, \tr^\cC)$ and $(\cD, \tr^\cD)$ be two Calabi-Yau
categories. A linear functor $F: \cC \to \cD$ is called a Calabi-Yau
functor, if
    \begin{equation}
       \tr_c^\cC(f)=\tr_{F(c)}^\cD(F(f))
  \end{equation}
  for each $f\in \End_\cC(c)$ and for each $c \in \Ob(\cC)$.
  Equivalently, one may require that
  \begin{equation}
        \langle F f, Fg \rangle_\cD = \langle f, g \rangle_\cC 
  \end{equation}
  for every pair of morphisms $f:c \to d$ and $g: d \to c$ in $\cC$.

  If $F$, $G: \cC \to \cD$ are two Calabi-Yau functors between
  Calabi-Yau categories, a Calabi-Yau natural transformation is just
  an ordinary natural transformation.

  This allows us to define the symmetric monoidal bicategory $\CY$
  consisting of Calabi-Yau categories, Calabi-Yau functors and natural
  transformations. The monoidal structure is given by the Deligne
  tensor product of abelian categories.
\end{newdef}

\begin{cor}
\label{cor:cy-fixed-points}
Suppose $\cC=\core{\Vect_2^{\fd}}$, and consider the trivial
$SO(2)$-action on $\cC$. Then $\cC^{SO(2)}$ is equivalent to the
bicategory of Calabi-Yau categories.
\end{cor}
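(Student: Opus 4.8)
The plan is to apply Theorem \ref{thm:so2-fixed-point-bicat} directly and then translate its output into the language of Calabi-Yau structures, entirely parallel to the proof of Corollary \ref{cor:frob-fixed-points}. By the theorem, an object of $\cC^{SO(2)}$ with $\cC = \core{\Vect_2^\fd}$ consists of a finite, semi-simple $\K$-linear category $\cD$ together with a $2$-isomorphism $\lambda: \id_\cD \to \id_\cD$, i.e.\ an invertible natural transformation of the identity functor. The first step is to identify such a $\lambda$ with a trace on $\cD$. A natural endomorphism of $\id_\cD$ assigns to each object $c$ an element $\lambda_c \in \End_\cD(c)$ subject to naturality; since $\cD$ is semi-simple, $\End_\cD$ of a simple object is just $\K$, so $\lambda$ amounts to a choice of nonzero scalar on each simple object (additivity on direct sums being automatic from naturality), just as in Lemma \ref{lem:symmetric-frobenius-form-unique-up-to-multiple}. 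I would spell out the bijection: given $\lambda$, define $\tr^\cD_c : \End_\cD(c) \to \K$ using the semi-simple decomposition of $c$ and the scalars $\lambda_{S}$ on simples, composing with the canonical (Hattori--Stallings-type) trace; conversely a Calabi-Yau trace $\tr^\cD$ determines the scalars via its values on identity endomorphisms of simples, and non-degeneracy of the pairing forces these to be nonzero, hence $\lambda$ invertible. Properties (1)--(3) of Definition \ref{def:calabi-yau-cat} correspond precisely to: cyclicity of the trace (automatic for the canonical trace), additivity (naturality of $\lambda$ on $\oplus$), and non-degeneracy (invertibility of each $\lambda_c$).

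Next I would handle $1$-morphisms. By Theorem \ref{thm:so2-fixed-point-bicat}, a $1$-morphism $(\cD,\lambda) \to (\cD',\lambda')$ is an equivalence of linear categories $F : \cD \to \cD'$ such that diagram \eqref{eq:1-morphism-fixed-point-modification-condition-1} commutes, i.e.\ the whiskerings $\lambda' * \id_F$ and $\id_F * \lambda$ agree as $2$-morphisms $F \to F$ (modulo the coherence isomorphisms, which in $\Vect_2$ are honest natural isomorphisms). Unwinding this componentwise says $\lambda'_{F(c)} = F(\lambda_c)$ in $\End_{\cD'}(F(c))$ for every object $c$. I would then show this is equivalent to $\tr^{\cD}_c(f) = \tr^{\cD'}_{F(c)}(F(f))$ for all $f \in \End_\cD(c)$, i.e.\ that $F$ is a Calabi-Yau functor in the sense of Definition \ref{def:calabi-yau-functor}: the forward direction is immediate from the definition of the trace in terms of the $\lambda$-scalars and functoriality; the reverse follows by evaluating on identity morphisms of simples. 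Finally, $2$-morphisms of $\cC^{SO(2)}$ carry no extra condition by the theorem, matching the fact that Calabi-Yau natural transformations are ordinary natural transformations. Assembling these three identifications and invoking the equivalence $\core{\Vect_2^\fd}$ has as fully-dualizable objects exactly the finite semi-simple categories gives an equivalence $\cC^{SO(2)} \cong \CY$.

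The main obstacle I anticipate is bookkeeping the coherence isomorphisms in Theorem \ref{thm:so2-fixed-point-bicat}: since $\Vect_2$ is not a strict $2$-category, the unlabeled arrows in \eqref{eq:1-morphism-fixed-point-modification-condition-1} are the associator/unitor natural isomorphisms, and one must check that the translation of the modification condition into the pointwise equation $\lambda'_{F(c)} = F(\lambda_c)$ is not disrupted by them. This is routine but requires care. A secondary subtlety is verifying that the canonical trace used to build $\tr^\cD$ from the scalars $\lambda_S$ genuinely satisfies cyclicity (property (1)); this reduces to the standard fact that for finite-dimensional vector spaces $\tr(gf) = \tr(fg)$, extended along the semi-simple decomposition, and can be cited rather than belabored. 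I would present the proof compactly, emphasizing the dictionary $\lambda \leftrightarrow \tr^\cD$ and deferring the coherence checks to a remark that they proceed exactly as in the strict case with associators inserted.
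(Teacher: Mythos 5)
Your proposal is correct and follows essentially the same route as the paper's proof: apply Theorem \ref{thm:so2-fixed-point-bicat}, use Schur's lemma to reduce the invertible natural endomorphism $\lambda$ of the identity to a choice of nonzero scalar on each simple object, identify this with a Calabi-Yau trace, and match the $1$-morphism condition $\lambda'_{F(c)} = F(\lambda_c)$ with the definition of a Calabi-Yau functor. The paper is terser — it simply asserts that the scalars on simples are equivalent to a Calabi-Yau structure — so your explicit dictionary between $\lambda$ and $\tr^\cC$ and your attention to the coherence isomorphisms are elaborations rather than deviations.
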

\begin{proof}
  Indeed, by Theorem \ref{thm:so2-fixed-point-bicat} a homotopy fixed
  point consists of a category $\cC$, together with a natural
  transformation $\lambda:\id_\cC \to \id_\cC$. Let $X_1$, \ldots,
  $X_n$ be the simple objects of $\cC$. Then, the natural
  transformation $\lambda : \id_\cC \to \id_\cC$ is fully determined
  by giving an endomorphism $\lambda_X : X \to X$ for every simple
  object $X$. Since $\lambda$ is an invertible natural transformation,
  the $\lambda_X$ must be central invertible elements in
  $\End_\cC(X)$. Since we work over an algebraically closed field,
  Schur's Lemma shows that $\End_\cC(X) \cong \K$ as vector
  spaces. Hence, the structure of a natural transformation of the
  identity functor of $\cC$ boils down to choosing a non-zero scalar
  for each simple object of $\cC$. This structure is equivalent to
  giving $\cC$ the structure of a Calabi-Yau category.

  Now note that by equation
  \eqref{eq:1-morphism-fixed-point-modification-condition-1} in
  Theorem \ref{thm:so2-fixed-point-bicat}, 1-morphisms of homotopy
  fixed points consist of equivalences of categories $F: \cC \to \cC'$
  so that $F(\lambda_X) = \lambda'_{F(X)}$ for every object $X$ of
  $\cC$. This is exactly the condition saying that $F$ must a
  Calabi-Yau functor.
 
  Finally, one can see that 2-morphisms of homotopy fixed points are
  given by natural isomorphisms of Calabi-Yau functors.
\end{proof}

\begin{thebibliography}{EGNO15}

\bibitem[AM57]{albert1957}
A.~A. Albert and B.~Muckenhoupt, \emph{On matrices of trace zero}, Michigan
  Math. J. \textbf{4} (1957), no.~1, 1--3.

\bibitem[Bas68]{bass}
H.~Bass, \emph{Algebraic {$K$}-theory}, W. A. Benjamin, Inc., New
  York-Amsterdam, 1968.

\bibitem[BDSV15]{bdsv15-2}
B.~{Bartlett}, C.~L. {Douglas}, C.~J. {Schommer-Pries}, and J.~{Vicary},
  \emph{{Modular categories as representations of the 3-dimensional bordism
  2-category}}. \arXiv{1509.06811}

\bibitem[Dav11]{davi11}
O.~Davidovich, \emph{State sums in two dimensional fully extended topological
  field theories}, Ph.D. thesis, University Texas at Austin (2011).

\bibitem[EGH{\etalchar{+}}11]{eghlsvy11}
P.~Etingof, O.~Goldberg, S.~Hensel, T.~Liu, A.~Schwender, D.~Vaintrob, and
  E.~Yudovina, \emph{Introduction to representation theory}, Student
  mathematical library, vol.~59, American Mathematical Society, 2011.

\bibitem[EGNO15]{egno-tensor-book}
P.~Etingof, S.~Gelaki, D.~Nikshych, and V.~Ostrik, \emph{Tensor categories},
  Mathematical Surveys and Monographs, vol. 205, American Mathematical Society,
  Providence, RI, 2015.

\bibitem[FHLT10]{fhlt}
D.~S. Freed, M.~J. Hopkins, J.~Lurie, and C.~Teleman, \emph{Topological quantum
  field theories from compact {L}ie groups}, A celebration of the mathematical
  legacy of {R}aoul {B}ott, CRM Proc. Lecture Notes, vol.~50, Amer. Math. Soc.,
  Providence, RI, 2010, pp.~367--403. \arXiv{0905.0731}

\bibitem[FV15]{fiva15}
D.~Fiorenza and A.~Valentino, \emph{Boundary conditions for topological quantum
  field theories, anomalies and projective modular functors}, Comm. Math. Phys.
  \textbf{338} (2015), no.~3, 1043--1074. \arXiv{1409.5723}

\bibitem[GPS95]{gps95}
R.~Gordon, A.~J. Power, and R.~Street, \emph{Coherence for tricategories}, Mem.
  Amer. Math. Soc. \textbf{117} (1995), no.~558.

\bibitem[Gur07]{gur07}
N.~Gurski, \emph{An algebraic theory of tricategories}, Ph.D. thesis,
  University of Chicago (2007).

\bibitem[{Hes}16]{hesse16}
J.~{Hesse}, \emph{{An equivalence between Frobenius algebras and Calabi-Yau
  categories}}, ArXiv e-prints (2016). \arXiv{1609.06475}

\bibitem[HKK01]{hkk-fundamental}
K.~A. Hardie, K.~H. Kamps, and R.~W. Kieboom, \emph{A homotopy bigroupoid of a
  topological space}, Appl. Categ. Structures \textbf{9} (2001), no.~3,
  311--327.

\bibitem[Kel89]{kelly-elementary}
G.~M. Kelly, \emph{Elementary observations on {$2$}-categorical limits}, Bull.
  Austral. Math. Soc. \textbf{39} (1989), no.~2, 301--317.

\bibitem[Kir02]{kir02}
A.~Kirillov, Jr., \emph{Modular categories and orbifold models}, Comm. Math.
  Phys. \textbf{229} (2002), no.~2, 309--335. \arXiv{math/0104242}

\bibitem[Koc03]{kock2004frobenius}
J.~Kock, \emph{Frobenius algebras and {2D} topological quantum field theories},
  London Mathematical Society, Student Texts 59, Cambridge University Press,
  2003.

\bibitem[KV91]{KV91}
M.~M. Kapranov and V.~A. Voevodsky, \emph{$\infty $-groupoids and homotopy
  types}, Cahiers de Topologie et Géométrie Différentielle Catégoriques
  \textbf{32} (1991), no.~1, 29--46.

\bibitem[Lac10]{lack-2-cat-companion}
S.~Lack, \emph{A 2-categories companion}, Towards higher categories, IMA Vol.
  Math. Appl., vol. 152, Springer, New York, 2010, pp.~105--191.
  \arXiv{math/0702535}

\bibitem[Lur09a]{Lu09}
J.~Lurie, \emph{Higher topos theory}, Annals of Mathematics Studies, vol. 170,
  Princeton University Press, Princeton, NJ, 2009.

\bibitem[Lur09b]{Lurie09}
\bysame, \emph{On the classification of topological field theories}, Current
  developments in mathematics, Int. Press, Somerville, MA, 2009, pp.~129--280.

\bibitem[MN14]{mn14}
M.~{Mombelli} and S.~{Natale}, \emph{{Module categories over equivariantized
  tensor categories}}, ArXiv e-prints (2014). \arXiv{1405.7896}

\bibitem[MS93]{MoSv93}
I.~Moerdijk and J.-A. Svensson, \emph{Algebraic classification of equivariant
  homotopy 2-types, i}, Journal of Pure and Applied Algebra \textbf{89} (1993),
  no.~1–2, 187 -- 216.

\bibitem[MS06]{morre-segal}
G.~W. {Moore} and G.~{Segal}, \emph{{D-branes and K-theory in {2D} topological
  field theory}}, 2006. \arXiv{hep-th/0609042}

\bibitem[Sch13]{schau12}
G.~Schaumann, \emph{Traces on module categories over fusion categories}, J.
  Algebra \textbf{379} (2013), 382--425. \arXiv{1206.5716}

\bibitem[{Sim}98]{Si98}
C.~{Simpson}, \emph{Homotopy types of strict 3-groupoids}, ArXiv Mathematics
  e-prints (1998). \arXiv{math/9810059}

\bibitem[SP09]{schommerpries-classification}
C.~J. Schommer-Pries, \emph{The classification of two-dimensional extended
  topological field theories}, ProQuest LLC, Ann Arbor, MI, 2009, Thesis
  (Ph.D.)--University of California, Berkeley. \arXiv{1112.1000}

\bibitem[Str80]{street-fibs-bicats}
R.~Street, \emph{Fibrations in bicategories}, Cahiers Topologie G\'eom.
  Diff\'erentielle \textbf{21} (1980), no.~2, 111--160.

\bibitem[Str87]{street-fibs-bicats-correction}
\bysame, \emph{Correction to: ``{F}ibrations in bicategories''}, Cahiers
  Topologie G\'eom. Diff\'erentielle Cat\'eg. \textbf{28} (1987), no.~1,
  53--56.

\end{thebibliography}
\newcommand{\etalchar}[1]{$^{#1}$}
\providecommand{\bysame}{\leavevmode\hbox to3em{\hrulefill}\thinspace}
\providecommand{\ZM}{\relax\ifhmode\unskip\space\fi Zbl }
\providecommand{\MR}{\relax\ifhmode\unskip\space\fi MR }
\providecommand{\arXiv}[1]{\relax\ifhmode\unskip\space\fi\href{http://arxiv.org/abs/#1}{arXiv:#1}}
\providecommand{\MRhref}[2]{%
  \href{http://www.ams.org/mathscinet-getitem?mr=#1}{#2}
}
\providecommand{\href}[2]{#2}

\end{document}